
\documentclass[a4paper,12pt,oneside]{amsart}


\usepackage[applemac]{inputenc}
\usepackage[british]{babel}
\usepackage{fancyhdr}
\usepackage[height=21cm]{geometry}
\usepackage{setspace}\setstretch{1.1}
\usepackage{bbm}
\usepackage{amssymb}
\usepackage[all]{xy}


\usepackage{color}
\definecolor{citation}{rgb}{0.2,0.3,1}
\usepackage[final,colorlinks,citecolor=citation,pagebackref]{hyperref}


\pagestyle{fancy}
\lhead{\footnotesize }
\chead{\footnotesize }
\rhead{\footnotesize }
\lfoot{\footnotesize }
\cfoot{\footnotesize\thepage}
\rfoot{\footnotesize }

\setlength{\footskip}{30pt}


\newlength{\aufzleft}
\newenvironment{aufz}{\begin{list}{}{\setlength{\listparindent}{0pt}\setlength{\itemsep}{\topsep}\setlength{\labelwidth}{3.2ex}\setlength{\aufzleft}{\labelsep}\addtolength{\aufzleft}{\labelwidth}\setlength{\leftmargin}{\aufzleft}}}{\end{list}}
\newenvironment{equi}{\begin{list}{}{\setlength{\listparindent}{0pt}\setlength{\itemsep}{\topsep}\setlength{\labelwidth}{4.1ex}\setlength{\aufzleft}{\labelsep}\addtolength{\aufzleft}{\labelwidth}\setlength{\leftmargin}{\aufzleft}}}{\end{list}}


\newtheoremstyle{par}{1ex}{2ex}{\rm}{}{\bfseries}{}{0.8em}{\thmnumber{(#2)}}
\newtheoremstyle{thm}{1ex}{2ex}{\itshape}{}{\bfseries}{}{0.9em}{\thmnumber{(#2)}\thmname{ #1}\thmnote{ (#3)}}
\newtheoremstyle{ex}{1ex}{2ex}{\rm}{}{\bfseries}{}{0.8em}{\thmnumber{(#2)}\thmname{ #1}}

\theoremstyle{par}
\newtheorem{no}{}[section]

\theoremstyle{thm}
\newtheorem{lemma}[no]{Lemma}
\newtheorem{prop}[no]{Proposition}
\newtheorem{cor}[no]{Corollary}
\newtheorem{thm}[no]{Theorem}

\theoremstyle{ex}
\newtheorem{qu}[no]{Question}
\newtheorem{qus}[no]{Questions}


\newcommand{\N}{\mathbbm{N}}
\newcommand{\Z}{\mathbbm{Z}}
\newcommand{\Q}{\mathbbm{Q}}
\newcommand{\dfgl}{\mathrel{\mathop:}=}
\newcommand{\loccit}{\textit{loc.\,cit.\ }}
\newcommand{\hm}[3]{{\rm Hom}_{#1}(#2,#3)}
\newcommand{\ga}{\Gamma}
\newcommand{\oga}{\overline{\Gamma}}
\newcommand{\Id}{{\rm Id}}
\newcommand{\catmod}{{\sf Mod}}
\newcommand{\res}{\!\upharpoonright}
\newcommand{\ia}{\mathfrak{a}}
\newcommand{\ib}{\mathfrak{b}}
\newcommand{\ic}{\mathfrak{c}}
\newcommand{\ip}{\mathfrak{p}}
\newcommand{\iq}{\mathfrak{q}}
\newcommand{\im}{\mathfrak{m}}
\newcommand{\ilim}{\varinjlim}
\newcommand{\eps}{\varepsilon}
\newcommand{\assf}{\ass^{\rm f}}
\newcommand{\NN}{\mathbf{N}}

\DeclareMathOperator{\ass}{Ass}
\DeclareMathOperator{\supp}{Supp}
\DeclareMathOperator{\var}{Var}
\DeclareMathOperator{\zd}{ZD}
\DeclareMathOperator{\spec}{Spec}
\DeclareMathOperator{\card}{Card}
\DeclareMathOperator{\coker}{Coker}
\DeclareMathOperator{\ke}{Ker}
\DeclareMathOperator{\ob}{Ob}
\DeclareMathOperator{\idem}{Idem}

\newdir{ >}{{}*!/-10pt/\dir{>}}
\newdir{ (}{!/5pt/@{ }*@^{(}}


\begin{document}

\title{Torsion functors, small or large}
\author{Fred Rohrer}
\address{Grosse Grof 9, 9470 Buchs, Switzerland}
\email{fredrohrer@math.ch}

\begin{abstract}
Let $\ia$ be an ideal in a commutative ring $R$. For an $R$-module $M$, we consider the small $\ia$-torsion $\ga_\ia(M)=\{x\in M\mid\exists n\in\N:\ia^n\subseteq(0:_Rx)\}$ and the large $\ia$-torsion $\oga_\ia(M)=\{x\in M\mid\ia\subseteq\sqrt{(0:_Rx)}\}$. This gives rise to two functors $\ga_\ia$ and $\oga_\ia$ that coincide if $R$ is noetherian, but not in general. In this article, basic properties of as well as the relation between these two functors are studied, and several examples are presented, showing that some well-known properties of torsion functors over noetherian rings do not generalise to non-noetherian rings.
\end{abstract}

\maketitle\thispagestyle{fancy}


\section*{Introduction}

Throughout the following, let $R$ be a ring and let $\ia\subseteq R$ be an ideal. (Rings are always understood to be commutative.) The $\ia$-torsion functor and its right derived cohomological functor (i.e., local cohomology) are useful tools in commutative algebra and algebraic geometry. They behave nicely if the ring $R$ is noetherian, and they are mostly studied in this situation (cf. \cite{bs} for a comprehensive treatment from an algebraic point of view). If we wish to extend the theory of local cohomology to non-noetherian rings, then we face several challenges. For example, torsion functors need not preserve injectivity of modules (\cite{qr}, \cite{ss}), and local cohomology need not be isomorphic to \v{C}ech cohomology (\cite{lipman}, \cite{ss}). But even more fundamental, the two definitions of torsion functors that are usually used and that are equivalent over a noetherian ring turn out to be not equivalent anymore in a non-noetherian setting. Namely, for an $R$-module $M$ we consider its sub-$R$-modules \[\ga_\ia(M)=\{x\in M\mid\exists n\in\N:\ia^n\subseteq(0:_Rx)\}\] and \[\oga_\ia(M)=\{x\in M\mid\ia\subseteq\sqrt{(0:_Rx)}\}.\] Then, $\ga_\ia(M)\subseteq\oga_\ia(M)$, but these two modules need not be equal. These definitions can be extended to two preradicals $\ga_\ia$ and $\oga_\ia$; we call the first one \textit{the small $\ia$-torsion functor} and the second one \textit{the large $\ia$-torsion functor,} reflecting the aforementioned inclusion between them. It is the aim of this article to study and compare these two functors over arbitrary rings. Its goal is twofold. First, it provides a collection of basic results on torsion functors over arbitrary rings; considering the ubiquity of noetherian hypotheses in the literature, this seems useful. Second, it shall serve as a warning: When leaving the cosy noetherian home for the non-noetherian wilderness, one has to be very careful, since one may lose properties of torsion functors one got used to.\smallskip

\chead{\footnotesize Torsion functors, small or large}
We briefly mention now some of the questions we will discuss.\smallskip

\begin{aufz}
\item[A)] \textit{For an ideal $\ib\subseteq R$, how can we compare $\ia$-torsion and $\ib$-torsion functors?}
\end{aufz}
It was shown in \cite{tor1} that the comparison of $\ga_\ia$ and $\ga_\ib$ may be delicate, and in particular that $\ga_\ia$ and $\ga_{\sqrt{\ia}}$ need not coincide. Large torsion functors behave better: $\oga_\ia=\oga_\ib$ if and only if $\sqrt{\ia}=\sqrt{\ib}$.\smallskip

\begin{aufz}
\item[B)] \textit{When do we have $\ga_\ia=\oga_\ia$?}
\end{aufz}
This holds if $R$ is noetherian, but not in general. We show that $\ga_\ia=\oga_\ia$ if and only if $\ia$ is half-centred, i.e., if an $R$-module $M$ fulfills $\ga_\ia(M)=M$ if and only if its weak assassin is contained in $\var(\ia)$.\smallskip

\begin{aufz}
\item[C)] \textit{When are the $\ia$-torsion functors radicals?}
\end{aufz}
The functor $\oga_\ia$ is a radical, and hence $\ga_\ia$ is a radical in case $R$ is noetherian. In general, $\ga_\ia$ need not be a radical. It is moreover possible that $\ga_\ia$ is a radical without coinciding with $\oga_\ia$.\smallskip

\begin{aufz}
\item[D)] \textit{When do the $\ia$-torsion $R$-modules form a Serre class?}
\end{aufz}
The $R$-modules $M$ with $\oga_\ia(M)=M$ form a Serre class, but those with $\ga_\ia(M)=M$ need not do so.\smallskip

\begin{aufz}
\item[E)] \textit{When do $\ia$-torsion functors commute with flat base change?}
\end{aufz}
If $\ia$ is of finite type, then $\ga_\ia=\oga_\ia$ commutes with flat base change. In general this need not be the case. On the positive side, $\ga_\ia$ commutes with flat base change to an $R$-algebra whose underlying $R$-module is Mittag-Leffler.\smallskip

\begin{aufz}
\item[F)] \textit{What exactness properties do $\ia$-torsion functors have?}
\end{aufz}
The functors $\ga_\ia$ and $\oga_\ia$ are left exact and commute with direct sums. In general, they are not exact, and they commute neither with infinite products nor with right filtering inductive limits.\smallskip

The first two sections are of a preliminary nature. Section 1 contains some results on nilpotency and idempotency, for in the examples and counterexamples in later sections we often consider ideals that are nil or idempotent, and we also give some specific results for these cases. In Section 2 we collect some basics on Mittag-Leffler modules, a notion which pops up in our investigation of the flat base change property. The study of torsion functors starts with Section 3.\smallskip

In a subsequent work (\cite{part2}) we will extend the results on the interplay of assassins and weak assassins with small torsion functors from \cite{asstor} and generalise them to large torsion functors. A further step should be an investigation of the relation between small and large local cohomology, i.e., the right derived cohomological functors of the small and the large torsion functor. For this, a study similar to \cite{qr} about the behaviour of injective modules under large torsion functors will be necessary.\bigskip

\noindent\textbf{Notation.} We denote by $\catmod(R)$ the category of $R$-modules. For a set $I$ we denote by $R[(X_i)_{i\in I}]$ the polynomial algebra in the indeterminates $(X_i)_{i\in I}$ over $R$. We denote by $\idem(R)$ the set of idempotent elements of $R$, by $\var(\ia)$ the set of prime ideals of $R$ containing $\ia$, and by $\sqrt{\ia}$ the radical of $\ia$. For an $R$-module $M$ we denote by $\zd_R(M)$ the set of zerodivisors on $M$, by $\supp_R(M)$ the support of $M$, by $\ass_R(M)$ the assassin of $M$, and by $\assf_R(M)$ the weak assassin of $M$. If $h\colon R\rightarrow S$ is a morphism of rings and no confusion can arise, then we denote by $\ia S$ the ideal $\langle h(\ia)\rangle_S\subseteq S$ and by $\bullet\res_R\colon\catmod(S)\rightarrow\catmod(R)$ the scalar restriction functor with respect to $h$. In general, notation and terminology follow Bourbaki's \textit{\'El\'ements de math\'ematique.}


\section{Preliminaries on nilpotency and idempotency}

\begin{no}\label{nil10}
The ideal $\ia$ is called \textit{nilpotent} if there exists $n\in\N$ such that $\ia^n=0$, \textit{quasinilpotent} (or \textit{nil of bounded index}) if there exists $n\in\N$ such that for every $r\in\ia$ we have $r^n=0$, \textit{T-nilpotent} if for every sequence $(r_i)_{i\in\N}$ in $\ia$ there exists $n\in\N$ such that $\prod_{i=0}^nr_i=0$, and \textit{nil} if for every $r\in\ia$ there exists $n\in\N$ such that $r^n=0$.
\end{no}

\begin{no}\label{cora95}
We say that \textit{$\ia$ has a power of finite type} if there exists $n\in\N^*$ such that $\ia^n$ is of finite type. Ideals of finite type and nilpotent ideals clearly have powers of finite type, but an ideal that has a power of finite type need neither be of finite type nor nilpotent (\cite[Section 3]{ghr}).
\end{no}

\begin{no}\label{nil15}
It is readily checked that \[\xymatrix@R15pt{\ia\text{ is nilpotent}\ar@{=>}[r]\ar@{=>}[d]&\ia\text{ is quasinilpotent}\ar@{=>}[d]\\\ia\text{ is T-nilpotent}\ar@{=>}[r]&\ia\text{ is nil},}\] and that these four properties are equivalent if $\ia$ has a power of finite type. The zero ideal has all of these properties, while -- provided $R\neq 0$ -- the ideal $R$ has none of them.
\end{no}

\begin{no}\label{nil20}
A) A quasinilpotent ideal need not be T-nilpotent. Indeed, let $K$ be a field of characteristic $2$, let \[R\dfgl K[(X_i)_{i\in\N}]/\langle X_i^2\mid i\in\N\rangle,\] let $Y_i$ denote the canonical image of $X_i$ in $R$ for $i\in\N$, and let $\ia\dfgl\langle Y_i\mid i\in\N\rangle_R$. For $r\in\ia$ there exist $n\in\N$ and $(a_i)_{i=0}^n\in K$ with $r=\sum_{i=0}^na_iY_i$, implying $r^2=\sum_{i=0}^na_i^2Y_i^2=0$. Therefore, $\ia$ is quasinilpotent. Furthermore, $\prod_{i=0}^nY_i\neq 0$ for every $n\in\N$, hence $\ia$ is not T-nilpotent. Note that $R$ is a $0$-dimensional local ring with maximal ideal $\ia$. (This example is taken from \cite[III.4.4]{lazard}.)\smallskip

B) A T-nilpotent ideal need not be quasinilpotent. Indeed, let $K$ be a field, let \[R\dfgl K[(X_i)_{i\in\N}]/\langle\{X_iX_j\mid i,j\in\N,i\neq j\}\cup\{X_i^{i+1}\mid i\in\N\}\rangle,\] let $Y_i$ denote the canonical image of $X_i$ in $R$ for $i\in\N$, and let $\ia\dfgl\langle Y_i\mid i\in\N\rangle_R$. If there exists a sequence $(r_i)_{i\in\N}$ in $\ia$ such that for every $n\in\N$ we have $\prod_{i=0}^nr_i\neq 0$, then there exists such a sequence where all the $r_i$ are monomials in $(Y_j)_{j\in\N}$, and as $Y_iY_j=0$ for $i,j\in\N$ with $i\neq j$, there exists $k\in\N$ such that all the $r_i$ are monomials in $Y_k$, implying the contradiction $\prod_{i=0}^{k+1}r_i=0$. Therefore, $\ia$ is T-nilpotent. Furthermore, $Y_n^n\neq 0$ for every $n\in\N$, hence $\ia$ is not quasinilpotent. Note that $R$ is a $0$-dimensional local ring with maximal ideal $\ia$. (This example is taken from \cite[2.12]{qr}.)\smallskip

C) It follows from A) and B) that the only implications between the four properties in \ref{nil15} are the ones given there and their obvious composition.
\end{no}

\begin{no}\label{nil25}
A quasinilpotent and T-nilpotent ideal need not be nilpotent. Indeed, let $K$ be a field of characteristic $2$, let \[R\dfgl K[(X_i)_{i\in\N}]/\langle\{X_i^2\mid i\in\N\}\cup\{X_iX_j\mid i,j\in\N,2i<j\}\rangle,\] let $Y_i$ denote the canonical image of $X_i$ in $R$ for $i\in\N$, and let $\ia\dfgl\langle Y_i\mid i\in\N\rangle_R$. If $n\in\N$, then for $i,j\in[n+1,2n]$ we have $2i\geq j$, implying $0\neq\prod_{i=n+1}^{2n}Y_i\in\ia^n$. Thus, $\ia$ is not nilpotent. Moreover, the same argument as in \ref{nil20} A) shows that $\ia$ is quasinilpotent. Finally, if there exists a sequence $(r_i)_{i\in\N}$ in $\ia$ such that for every $n\in\N$ we have $\prod_{i=0}^nr_i\neq 0$, then there exists such a sequence where all the $r_i$ are monomials in $(Y_j)_{j\in\N}$, and thus we can suppose that $r_i=Y_{j_i}$ for every $i\in\N$, where $(j_i)_{i\in\N}$ is an injective family in $\N$. Injectivity of $(j_i)_{i\in\N}$ implies that there exists $k\in\N$ with $j_k>2j_0$, yielding the contradiction $\prod_{i=0}^kr_i=0$. Therefore, $\ia$ is T-nilpotent. Note that $R$ is a $0$-dimensional local ring with maximal ideal $\ia$. (This example was pointed out by Yves Cornulier.)
\end{no}

\begin{no}\label{idem05}
A) The ideal $\ia$ is said to be \textit{generated by a single idempotent} if there exists $e\in\idem(R)$ with $\ia=\langle e\rangle_R$, \textit{generated by idempotents} if there exists $E\subseteq\idem(R)$ with $\ia=\langle E\rangle_R$, and \textit{idempotent} if $\ia^2=\ia$.\smallskip

B) The ideal $\ia$ is generated by a single idempotent if and only if the canonical injection $\ia\hookrightarrow R$ is a section in $\catmod(R)$ (\cite[I.8.11 Proposition 10]{a}).
\end{no}

\begin{no}\label{idem20}
A) Clearly, we have \[\xymatrix{\parbox{3.5cm}{$\ia$ is generated by\\ a single idempotent}\ar@{=>}[r]&\parbox{3cm}{$\ia$ is generated\\ by idempotents}\ar@{=>}[r]&\parbox{3cm}{$\ia$ is idempotent.}}\]

B) If $\ia$ is of finite type, then the three properties in A) are equivalent. Indeed, if $\ia$ is of finite type and idempotent, then by Nakayama's Lemma (\cite[Theorem 2.2]{matsumura}) there exists $x\in(0:_R\ia)$ with $1-x\in\ia$, hence $(1-x)^2=1-x-x(1-x)=1-x$, and therefore $1-x\in\idem(R)$. For $y\in\ia$ we have $y=y-xy=y(1-x)$, hence $y\in\langle 1-x\rangle_R$. It follows $\ia\subseteq\langle 1-x\rangle_R\subseteq\ia$. So, $\ia=\langle 1-x\rangle_R$ is generated by a single idempotent.\smallskip

C) None of the converses in A) holds. Indeed, let $K$ be a field, let $R\dfgl K[(X_i)_{i\in\Z}]$, and consider the ideals $\ia\dfgl\langle X_i\mid i\in\Z\rangle_R$, $\ib\dfgl\langle X_i^2-X_i\mid i\in\Z\rangle_R$ and $\ic\dfgl\langle X_i^2-X_{i+1}\mid i\in\Z\rangle_R$. The canonical image of $\ia$ in $R/\ib$ is generated by idempotents, but not by a single idempotent, and the canonical image of $\ia$ in $R/\ic$ is idempotent, but not generated by idempotents.\smallskip

D) The ring $R/\ib$ from C) is absolutely flat. Indeed, let $Y_i$ denote the canonical image of $X_i$ in $R/\ib$ for $i\in\Z$. If $\ip\in\spec(R)$ and $i\in\N$, then $Y_i(1-Y_i)=0\in\ip$, hence $Y_i\in\ip$ or $1-Y_i\in\ip$. This shows \[\spec(R)=\{\langle Y_i\mid i\in I\rangle_R+\langle 1-Y_i\mid i\in J\rangle_R\mid (I,J)\text{ is a partition of }\N\}.\] So, if $\ip,\iq\in\spec(R)$ with $\ip\subsetneq\iq$, then we get the contradiction \[\{i\in\N\mid Y_i\in\ip\}\subsetneqq\{i\in\N\mid Y_i\in\iq\},\quad\N\setminus\{i\in\N\mid Y_i\in\ip\}\subsetneqq\N\setminus\{i\in\N\mid Y_i\in\iq\},\] thus $\dim(R)=0$. It is readily checked that $R/\ib$ is reduced and hence absolutely flat.\smallskip

E) The ring $R/\ic$ from C) is a $1$-dimensional Bezout domain. (Recall that a \textit{Bezout domain} is a domain whose ideals of finite type are principal, and that filtering unions of Bezout domains are Bezout domains (\cite[VII.1 Exercice 20]{ac}).) Indeed, let $Y_i$ denote the canonical image of $X_i$ in $R/\ic$ for $i\in\Z$. If $n\in\Z$, then $K[Y_n]$ is a polynomial algebra in one indeterminate over $K$, hence a principal ideal domain, and we have $K[Y_n]\subseteq K[Y_{n-1}]$. So, $R/\ic=\bigcup_{n\in\N}K[Y_{-n}]$ is a filtering union of principal ideal domains and therefore a Bezout domain. Now, we consider the extension $K[Y_0]\subseteq R/\ic$. For $i\in\N$ we have $Y_{-i}^{2^i}-Y_0=0$, hence $Y_{-i}$ is integral over $K[Y_0]$. Therefore, the extension $K[Y_0]\subseteq R/\ic$ is integral, and thus $\dim(R/\ic)=\dim(K[Y_0])=1$ (\cite[VIII.3 Th\'eor\`eme 1]{ac}) as claimed. (This is essentially \cite[Example 39]{hutchins}.)
\end{no}

\begin{prop}\label{idem30}
The following statements are equivalent: (i) Every ideal of finite type of $R$ is generated by a single idempotent; (ii) Every ideal of $R$ is generated by idempotents; (iii) Every ideal of $R$ is idempotent; (iv) $R$ is absolutely flat.
\end{prop}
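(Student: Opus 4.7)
My plan is to prove the chain $(\text{i})\Rightarrow(\text{ii})\Rightarrow(\text{iii})\Rightarrow(\text{i})$ and then close the loop with $(\text{i})\Leftrightarrow(\text{iv})$. Most implications are short; the one requiring a small computation is the passage from absolute flatness to finite-type ideals being generated by \emph{one} idempotent, since absolute flatness only directly yields this for \emph{principal} ideals.

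For $(\text{i})\Rightarrow(\text{ii})$, I would write an arbitrary ideal as the sum of its principal sub-ideals and observe that each is generated by an idempotent by (i). For $(\text{ii})\Rightarrow(\text{iii})$, if $\ia=\langle E\rangle_R$ with $E\subseteq\idem(R)$, then each $e\in E$ satisfies $e=e^2\in\ia^2$, hence $\ia\subseteq\ia^2$, and the other inclusion is trivial. For $(\text{iii})\Rightarrow(\text{i})$, an ideal of finite type that is idempotent is generated by a single idempotent by \ref{idem20} B).

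For $(\text{i})\Rightarrow(\text{iv})$, applying (i) to a principal ideal $\langle a\rangle_R$ yields an idempotent $e$ with $\langle a\rangle_R=\langle e\rangle_R$; writing $a=re$ and $e=sa$ for some $r,s\in R$ one gets $a=asa$, which is the defining condition of absolute flatness. For the converse $(\text{iv})\Rightarrow(\text{i})$, I would proceed by induction on the number of generators. The base case says exactly that each principal ideal is generated by an idempotent, which follows from $a=asa$ by noting that $e\dfgl as$ is idempotent and $\langle a\rangle_R=\langle e\rangle_R$. For the inductive step it suffices to check that the sum of two ideals generated by single idempotents $e,f$ is again generated by a single idempotent; the candidate is $g\dfgl e+f-ef$, for which one verifies $g^2=g$, $ge=e$, and $gf=f$, so $\langle e\rangle_R+\langle f\rangle_R=\langle g\rangle_R$.

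The only place requiring any computation is the last one, namely the idempotent-merging identity $g=e+f-ef$; everything else is a one-line argument once \ref{idem20} B) is at our disposal. I expect this to be the main, though still very mild, obstacle, as it is the point where we genuinely use that the finite-type hypothesis enables us to upgrade ``generated by idempotents'' to ``generated by a single idempotent''.
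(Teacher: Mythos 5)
Your argument is correct and covers all implications, but it takes a noticeably different route than the paper for the $(\text{i})\Leftrightarrow(\text{iv})$ part. The paper's cycle is $(\text{i})\Rightarrow(\text{ii})\Rightarrow(\text{iii})\Rightarrow(\text{i})$ together with $(\text{i})\Rightarrow(\text{iv})\Rightarrow(\text{iii})$, whereas you close via $(\text{i})\Leftrightarrow(\text{iv})$ directly. For $(\text{i})\Rightarrow(\text{iv})$ the paper uses the characterisation of ``generated by a single idempotent'' via the canonical injection $\ia\hookrightarrow R$ being a section (\ref{idem05} B)), and then the Bourbaki flatness criterion to conclude that $R$ is absolutely flat; you instead reduce to principal ideals and extract the von Neumann regularity identity $a=asa$. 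For the converse, the paper tensors $0\rightarrow\ia\hookrightarrow R\rightarrow R/\ia$ with the flat module $R/\ia$ to get $\ia=\ia^2$, landing at (iii); you go straight to (i) by a finite induction using the idempotent-merging identity $g=e+f-ef$. Your version is more elementary and computational, bypassing the tensor-product machinery, but it tacitly relies on the equivalence of absolute flatness with the von Neumann regular condition $\forall a\,\exists s:a=asa$, which you treat as the definition; if one uses Bourbaki's definition of absolute flatness (every $R$-module is flat), that equivalence is a (standard but nontrivial) theorem whose content is roughly what the paper's tensor argument supplies. Also note that the induction on generators and the idempotent-merging step, though correct, are not actually needed once you have (iii): you can simply invoke \ref{idem20} B), which is exactly what the paper does.
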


\begin{proof}
``(i)$\Rightarrow$(ii)$\Rightarrow$(iii)'' is clear. ``(iii)$\Rightarrow$(i)'' follows from \ref{idem20} B). ``(i)$\Rightarrow$(iv)'': If $\ia\subseteq R$ is an ideal of finite type, then the canonical injection $\ia\hookrightarrow R$ is a section (\ref{idem05} B)), hence the induced morphism $\ia\otimes_RM\rightarrow R\otimes_RM$ is a monomorphism for every $R$-module $M$ (\cite[I.2.1 Lemme 2]{ac}), and thus $R$ is absolutely flat (\cite[I.2.3 Remarque 1]{ac}). ``(iv)$\Rightarrow$(iii)'': If $\ia\subseteq R$ is an ideal, then tensoring the exact sequence of $R$-modules $0\rightarrow\ia\hookrightarrow R\rightarrow R/\ia$ with the flat $R$-module $R/\ia$ yields an exact sequence of $R$-modules $0\rightarrow\ia/\ia^2\rightarrow R/\ia\overset{\Id_{R/\ia}}\longrightarrow R/\ia$, and thus $\ia=\ia^2$ is idempotent. (This argument is essentially \cite[1.10]{qr}.) 
\end{proof}


\section{Preliminaries on Mittag-Leffler modules}

\noindent\textit{Throughout this section, let $M$ be an $R$-module.}

\begin{no}\label{mittag10}
A) For a family $\NN=(N_i)_{i\in I}$ of $R$-modules there is a canonical morphism of $R$-modules \[\eps_{M,\NN}\colon M\otimes_R\bigl(\prod_{i\in I}N_i\bigr)\rightarrow\prod_{i\in I}(M\otimes_RN_i)\] with $\eps_{M,\NN}(x\otimes(y_i)_{i\in I})=(x\otimes y_i)_{i\in I}$ for $x\in M$ and $(y_i)_{i\in I}\in\prod_{i\in I}N_i$, which need neither be a mono- nor an epimorphism (\cite[059I]{stacks}).\smallskip

B) Let $I$ be a set. For an $R$-module $N$ we consider the constant family $\NN=(N)_{i\in I}$ and get by A) a canonical morphism of $R$-modules \[\eps_{M,N,I}\dfgl\eps_{M,\NN}\colon M\otimes_R(N^I)\rightarrow(M\otimes_RN)^I,\] which need neither be a mono- nor an epimorphism. (It need not be a monomorphism by the first example in \cite[059I]{stacks} together with \ref{mittag30} below, and it need not be an epimorphism by the second example in \loccit\!\!)
\end{no}

\begin{no}\label{mittag14}
A) The canonical morphism $\eps_{M,\NN}$ is an isomorphism for every $\NN$ if and only if $M$ is of finite presentation, and it is an epimorphism for every $\NN$ if and only if $M$ is of finite type (\cite[059K, 059J]{stacks}).\smallskip

B) The $R$-module $M$ is called \textit{Mittag-Leffler} if the canonical morphism $\eps_{M,\NN}$ is a monomorphism for every $\NN$. (By \cite[059M]{stacks} this is equivalent to the usual definition of Mittag-Leffler modules (cf. \cite[0599]{stacks}).) If $M$ is of finite presentation, then it is Mittag-Leffler by A). If $M$ is projective, then it is Mittag-Leffler by \cite[059R]{stacks}.
\end{no}

\begin{no}\label{mittag20}
Let $\NN=(N_i)_{i\in I}$ be a family of $R$-modules. Taking the products of the canonical injections $\iota_i\colon N_i\rightarrowtail\bigoplus_{j\in I}N_j$ for $i\in I$ and keeping in mind \cite[II.1.5 Proposition 5]{a} we get a canonical monomorphism of $R$-modules \[\Delta_{\NN}\colon\prod_{i\in I}N_i\rightarrowtail\prod_{i\in I}\bigoplus_{j\in I}N_j.\]
\end{no}

\begin{prop}\label{mittag30}
Let $M$ be flat and let $I$ be a set. The following statements are equivalent:
\begin{equi}
\item[(i)] $\eps_{M,\NN}$ is a monomorphism for every family $\NN=(N_i)_{i\in I}$ of $R$-modules;
\item[(ii)] $\eps_{M,N,I}$ is a monomorphism for every $R$-module $N$.
\end{equi}
\end{prop}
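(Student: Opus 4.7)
The implication ``(i)$\Rightarrow$(ii)'' is immediate, since for a fixed $R$-module $N$ the assertion ``$\eps_{M,N,I}$ is a monomorphism'' is by definition the special case of (i) applied to the constant family $(N)_{i\in I}$. So the entire content of the proposition lies in the converse.

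For ``(ii)$\Rightarrow$(i)'', fix a family $\NN=(N_i)_{i\in I}$ and set $N\dfgl\bigoplus_{j\in I}N_j$. The plan is to sandwich $\eps_{M,\NN}$ into a commutative square whose other three sides are known monomorphisms, and to read off injectivity by a diagram chase. Specifically, I would consider the square
\[\xymatrix@C=45pt{M\otimes_R\bigl(\prod_{i\in I}N_i\bigr)\ar[r]^-{\eps_{M,\NN}}\ar[d]_-{M\otimes\Delta_{\NN}}&\prod_{i\in I}(M\otimes_RN_i)\ar[d]^-{\Delta_{(M\otimes N_i)_i}}\\M\otimes_R(N^I)\ar[r]^-{\eps_{M,N,I}}&(M\otimes_RN)^I,}\]
where the right-hand vertical arrow is the canonical monomorphism from \ref{mittag20} applied to the family $(M\otimes_RN_i)_{i\in I}$, and where the target $\prod_{i\in I}\bigoplus_{j\in I}(M\otimes_RN_j)$ of that map is identified with $(M\otimes_RN)^I$ by means of the canonical isomorphism $\bigoplus_{j\in I}(M\otimes_RN_j)\cong M\otimes_RN$ (tensor product commuting with direct sums).

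Commutativity is a direct check on elementary tensors $x\otimes(y_i)_{i\in I}$: both routes send such an element to the family $(x\otimes\iota_i(y_i))_{i\in I}$ in $(M\otimes_RN)^I$. Once commutativity is in hand, the argument is mechanical. The map $\Delta_{(M\otimes N_i)_i}$ is a monomorphism by construction (\ref{mittag20}); the map $\eps_{M,N,I}$ is a monomorphism by hypothesis (ii); and the map $M\otimes\Delta_{\NN}$ is a monomorphism because $\Delta_{\NN}$ is a monomorphism and $M$ is flat. If $z\in M\otimes_R\prod_{i\in I}N_i$ satisfies $\eps_{M,\NN}(z)=0$, then commutativity gives $\eps_{M,N,I}\bigl((M\otimes\Delta_{\NN})(z)\bigr)=0$, hence $(M\otimes\Delta_{\NN})(z)=0$ by injectivity of $\eps_{M,N,I}$, and therefore $z=0$ by injectivity of $M\otimes\Delta_{\NN}$. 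Thus $\eps_{M,\NN}$ is a monomorphism.

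There is no real obstacle in the argument; the only point requiring a moment of care is keeping track of the canonical identifications $M\otimes_RN=M\otimes_R\bigoplus_{j\in I}N_j\cong\bigoplus_{j\in I}(M\otimes_RN_j)$ and making sure that the right-hand vertical arrow in the square is indeed obtained from $\Delta_{(M\otimes N_i)_i}$ after this identification, so that the commutativity check reduces to the trivial equality above. The essential inputs are (a) flatness of $M$, used exactly once to transport $\Delta_{\NN}$ to a monomorphism after tensoring, and (b) the specialisation hypothesis (ii) applied to the single module $N=\bigoplus_{j\in I}N_j$.
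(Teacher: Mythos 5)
Your argument is correct and matches the paper's proof essentially line for line: both set $N=\bigoplus_{i\in I}N_i$, build the same commutative square out of $\eps_{M,\NN}$, $\eps_{M,N,I}$, $\Delta_\NN$ and $\Delta_{(M\otimes_R N_i)_i}$ (modulo the canonical identification of $\bigoplus_{j}(M\otimes_R N_j)$ with $M\otimes_R N$), and then use flatness of $M$ together with hypothesis (ii) to read off injectivity of $\eps_{M,\NN}$. The paper merely writes the top edge as a two-step composite ($\Delta_{M\otimes_R\NN}$ followed by the named isomorphism) and concludes abstractly from monomorphicity of the other three edges, whereas you collapse that composite and do an explicit element chase; these are cosmetically different presentations of the same proof.
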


\begin{proof}
Clearly, (i) implies (ii). Conversely, suppose that (ii) holds, let $\NN=(N_i)_{i\in I}$ be a family of $R$-modules, let $N\dfgl\bigoplus_{i\in I}N_i$, and let $M\otimes_R\NN\dfgl(M\otimes_RN_i)_{i\in I}$. There is a commutative diagram of $R$-modules \[\xymatrix@C40pt{\prod_{i\in I}(M\otimes_RN_i)\ar@{ >->}[r]^(.44){\Delta_{M\otimes_R\NN}}&\prod_{i\in I}\bigoplus_{j\in I}(M\otimes_RN_j)\ar[r]^(.48){\cong}&\prod_{i\in I}(M\otimes_R(\bigoplus_{j\in I}N_j))\\M\otimes_R(\prod_{i\in I}N_i)\ar[rr]^{M\otimes_R\Delta_{\NN}}\ar[u]^{\eps_{M,\NN}}&&M\otimes_R(\prod_{i\in I}\bigoplus_{j\in I}N_j),\ar@{ >->}[u]_{\eps_{M,N,I}}}\] where the unmarked isomorphism is the canonical one (\cite[II.3.7 Proposition 7]{a}). As $\Delta_{\NN}$ is a monomorphism (\ref{mittag20}) and $M$ is flat, it follows that $M\otimes_R\Delta_{\NN}$ is a monomorphism, hence $\eps_{M,\NN}$ is a monomorphism, and thus (i) holds.
\end{proof}

\begin{cor}\label{mittag40}
A flat $R$-module $M$ is Mittag-Leffler if and only if $\eps_{M,N,I}$ is a monomorphism for every $R$-module $N$ and every set $I$.
\end{cor}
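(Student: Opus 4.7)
The plan is to derive this corollary directly from Proposition \ref{mittag30}, using the definition of Mittag-Leffler from \ref{mittag14} B). There is essentially no new content: the corollary just packages the proposition in a form that does not fix the index set in advance.

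For the ``only if'' direction, suppose $M$ is Mittag-Leffler. Then by definition $\eps_{M,\NN}$ is a monomorphism for every family $\NN=(N_i)_{i\in I}$ of $R$-modules (for every index set $I$). Given an $R$-module $N$ and a set $I$, apply this to the constant family $\NN=(N)_{i\in I}$; the resulting morphism is precisely $\eps_{M,N,I}$ by the definition in \ref{mittag10} B), so it is a monomorphism.

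For the ``if'' direction, suppose $\eps_{M,N,I}$ is a monomorphism for every $R$-module $N$ and every set $I$. Fix an arbitrary set $I$ and an arbitrary family $\NN=(N_i)_{i\in I}$. Then statement (ii) of Proposition \ref{mittag30} holds for this $I$ (by hypothesis, with $N$ ranging over all $R$-modules). Since $M$ is flat, the equivalence in \ref{mittag30} yields statement (i) for the same $I$, namely that $\eps_{M,\NN}$ is a monomorphism. As $I$ and $\NN$ were arbitrary, $M$ is Mittag-Leffler in the sense of \ref{mittag14} B).

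The only conceivable obstacle is the quantifier structure: one has to notice that Proposition \ref{mittag30} is stated with a fixed set $I$, so to recover the Mittag-Leffler condition (which quantifies over \emph{all} index sets $I$) one simply applies the proposition separately for each $I$. No further calculation is needed.
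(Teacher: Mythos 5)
Your proof is correct and matches the paper's approach exactly: the paper simply says ``Immediately from \ref{mittag30}'', and your write-up spells out the quantifier bookkeeping that makes this deduction go through.
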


\begin{proof}
Immediately from \ref{mittag30}.
\end{proof}

\begin{prop}\label{mittag50}
Let $R$ be absolutely flat. The following statements are equivalent:
\begin{equi}
\item[(i)] $M$ is Mittag-Leffler;
\item[(ii)] There exists a cardinal $\kappa\geq\card(R)$ such that $\eps_{M,R,\kappa}$ is a monomorphism;
\item[(iii)] $M$ is pseudocoherent.
\end{equi}
\end{prop}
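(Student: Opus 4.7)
The plan is to prove the cycle (i)$\Rightarrow$(ii)$\Rightarrow$(iii)$\Rightarrow$(i), exploiting throughout that over an absolutely flat ring every module is flat (so that Corollary \ref{mittag40} is applicable and all tensor functors in sight are exact). The implication (i)$\Rightarrow$(ii) is immediate, since being Mittag-Leffler tested against the constant family $(R)_{i\in\kappa}$ is exactly the assertion in (ii).

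For (iii)$\Rightarrow$(i), I would write $M$ as the filtered union of its finitely generated submodules $M_\lambda$; pseudocoherence forces each $M_\lambda$ to be of finite presentation, so $\eps_{M_\lambda,\NN}$ is an isomorphism by \ref{mittag14} A). For $\alpha\in M\otimes_R\prod_iN_i$ with $\eps_{M,\NN}(\alpha)=0$, I would lift $\alpha$ to some $\alpha_\lambda\in M_\lambda\otimes_R\prod_iN_i$; since each $N_i$ is flat, each map $M_\lambda\otimes_RN_i\rightarrow M\otimes_RN_i$ is a monomorphism, hence so is their product over $i$, and chasing the commuting square with $\eps_{M_\lambda,\NN}$ and $\eps_{M,\NN}$ yields $\eps_{M_\lambda,\NN}(\alpha_\lambda)=0$, thus $\alpha_\lambda=0$ and $\alpha=0$. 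Applying \ref{mittag40} then gives (i).

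The heart of the proof is (ii)$\Rightarrow$(iii). Given $N=\sum_{j=1}^nRx_j\subseteq M$ finitely generated, I let $\phi\colon R^n\rightarrow N$ send $e_j\mapsto x_j$ and aim to show $K\dfgl\ke(\phi)$ is of finite type. With $\pi_j\colon K\rightarrow R$ the $j$-th coordinate function, I would form the element $\xi\dfgl\sum_{j=1}^nx_j\otimes\pi_j\in M\otimes_RR^K$ and observe from the defining relations of $K$ that $\eps_{M,R,K}(\xi)=0$. Assuming $R$ infinite (the finite case reducing to a semisimple ring in which all three conditions hold trivially), the bound $\card(K)\leq\card(R^n)=\card(R)\leq\kappa$ realises $\eps_{M,R,K}$ as a direct summand of $\eps_{M,R,\kappa}$, hence a monomorphism; so $\xi=0$ in $M\otimes_RR^K$, and by flatness of $R^K$ already $\xi=0$ in $N\otimes_RR^K$.

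To finish, I would feed this vanishing into the short exact sequence $0\rightarrow K\otimes_RR^K\rightarrow R^n\otimes_RR^K\rightarrow N\otimes_RR^K\rightarrow 0$ (exact since $R^K$ is flat) applied to the lift $\tilde\xi\dfgl\sum_je_j\otimes\pi_j\in R^n\otimes_RR^K$, obtaining $\tilde\xi=\sum_{l=1}^Lk_l\otimes\rho_l$ for some $k_l\in K$ and $\rho_l\in R^K$. The delicate point, and what I expect to be the main obstacle, is the bookkeeping for the canonical isomorphism $R^n\otimes_RR^K\cong(R^n)^K$ (an isomorphism by \ref{mittag14} A) since $R^n$ is of finite presentation): under it, $\tilde\xi$ corresponds precisely to the inclusion $K\hookrightarrow R^n$ viewed as a $K$-indexed family in $R^n$, and the decomposition $\tilde\xi=\sum_lk_l\otimes\rho_l$ translates into $k=\sum_l\rho_l(k)k_l$ for every $k\in K$. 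This exhibits $K=Rk_1+\cdots+Rk_L$ as finitely generated, so $N$ is of finite presentation and $M$ is pseudocoherent.
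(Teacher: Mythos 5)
Your proof is correct, but it takes a genuinely different route from the paper's. The paper simply cites Goodearl's theorem (\cite[Theorem 1 Corollary]{goodearl}) for the equivalence of (i), (ii) and ``every finitely generated submodule of $M$ is projective'', and then converts the last condition into (iii) by the Bourbaki fact that over an absolutely flat ring a finitely generated module is projective if and only if it is finitely presented. You instead give a self-contained proof of the cycle (i)$\Rightarrow$(ii)$\Rightarrow$(iii)$\Rightarrow$(i), essentially reproving the relevant special case of Goodearl's result from scratch. Your (iii)$\Rightarrow$(i) step (filtered union of finitely presented submodules, diagram chase, using that every module is flat so that all the comparison maps into $M\otimes_R N_i$ are monomorphisms) and your (ii)$\Rightarrow$(iii) step (the element $\xi=\sum_j x_j\otimes\pi_j$, the cardinal bookkeeping $\card(K)\leq\card(R)\leq\kappa$ with the finite-$R$ case dispatched separately, and the translation through $R^n\otimes_R R^K\cong(R^n)^K$ exhibiting $K$ as finitely generated) are both sound; the one small redundancy is that your (iii)$\Rightarrow$(i) argument already handles arbitrary families $\NN$, so the final appeal to \ref{mittag40} is unnecessary. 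The trade-off is the usual one: your version is longer but self-contained, whereas the paper's is two lines at the cost of two external references.
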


\begin{proof}
By \cite[Theorem 1 Corollary]{goodearl}, (i) and (ii) are equivalent, and they are equivalent to sub-$R$-modules of finite type of $M$ being projective. Since an $R$-module of finite type is projective if and only if it is of finite presentation (\cite[X.1.5 Proposition 8 Corollaire]{a}), this last condition is equivalent to $M$ being pseudocoherent (i.e., sub-$R$-modules of $M$ of finite type are of finite presentation).
\end{proof}


\section{Definition of torsion functors}

\begin{no}\label{def10}
A \textit{preradical} is a subfunctor of $\Id_{\catmod(R)}$. A \textit{radical} is a preradical $F$ such that $F(M/F(M))=0$ for every $R$-module $M$. Being subfunctors of the $R$-linear functor $\Id_{\catmod(R)}$, preradicals are $R$-linear.
\end{no}

\begin{no}\label{def20}
For an $R$-module $M$ we set \[\ga_\ia(M)\dfgl\{x\in M\mid\exists n\in\N:\ia^n\subseteq(0:_Rx)\}\] and \[\oga_\ia(M)\dfgl\{x\in M\mid\ia\subseteq\sqrt{(0:_Rx)}\},\] thus obtaining sub-$R$-modules \[\ga_\ia(M)\subseteq\oga_\ia(M)\subseteq M.\] By restriction and coastriction, a morphism of $R$-modules $h\colon M\rightarrow N$ induces morphisms of $R$-modules \[\ga_\ia(h)\colon\ga_\ia(M)\rightarrow\ga_\ia(N)\quad\text{and}\quad\oga_\ia(h)\colon\oga_\ia(M)\rightarrow\oga_\ia(N).\] So, there are subfunctors \[\ga_\ia\hookrightarrow\oga_\ia\hookrightarrow\Id_{\catmod(R)}.\] In particular, $\ga_\ia$ and $\oga_\ia$ are preradicals and hence $R$-linear (\ref{def10}). We call $\ga_\ia$ \textit{the small $\ia$-torsion functor} and $\oga_\ia$ \textit{the large $\ia$-torsion functor.}
\end{no}

\begin{no}\label{def30}
A) If $M$ is an $R$-module, then \[\oga_\ia(M)=\{x\in M\mid\supp_R(\langle x\rangle_R)\subseteq\var(\ia)\}.\] Indeed, if $x\in M$, then $\supp_R(\langle x\rangle_R)=\var(0:_Rx)$ (\cite[II.4.4 Proposition 17]{ac}), and thus we have $\ia\subseteq\sqrt{(0:_Rx)}$ if and only if $\supp_R(\langle x\rangle_R)\subseteq\var(\ia)$ (\cite[II.4.3 Proposition 11 Corollaire 2]{ac}).\smallskip

B) It follows from A) and \cite[II.4.4 Proposition 16]{ac} that for an $R$-module $M$ we have $M=\oga_\ia(M)$ if and only if $\supp_R(M)\subseteq\var(\ia)$.\smallskip

C) By \cite[1.2.11]{bs} (in whose proof noetherianness of $R$ is not used), there is a canonical isomorphism of functors \[\ga_\ia(\bullet)\cong\ilim_{n\in\N}\hm{R}{R/\ia^n}{\bullet}\] from $\catmod(R)$ to itself.\smallskip

D) For an ideal $\ib\subseteq R$, it is readily checked that \[\ga_{\ia+\ib}=\ga_\ia\cap\ga_\ib=\ga_\ia\circ\ga_\ib\quad\text{and}\quad\oga_{\ia+\ib}=\oga_\ia\cap\oga_\ib=\oga_\ia\circ\oga_\ib.\]
\end{no}

\begin{no}\label{def50}
Let $\ib\subseteq R$ be an ideal. Then, $\ga_\ia$ is a subfunctor of $\ga_\ib$ if and only if there exists $n\in\N$ with $\ib^n\subseteq\ia$. In particular, $\ga_\ia=\ga_{\ia^n}$ for every $n\in\N^*$. If $\ga_\ia=\ga_\ib$, then $\sqrt{\ia}=\sqrt{\ib}$, but the converse need not hold; it does hold if there exist $m,n\in\N$ with $\sqrt{\ia}^m\subseteq\ia$ and $\sqrt{\ib}^n\subseteq\ib$, hence in particular if $\sqrt{\ia}$ and $\sqrt{\ib}$ are of finite type. Similarly, we have $\ga_\ia=\ga_{\sqrt{\ia}}$ if and only if there exists $n\in\N$ with $\sqrt{\ia}^n\subseteq\ia$, but this need not hold in general (\cite[2.2, 2.3, 3.4]{tor1}).
\end{no}

\begin{prop}\label{def40}
Let $\ib\subseteq R$ be an ideal.

a) $\oga_\ia$ is a subfunctor of $\oga_\ib$ if and only if $\ib\subseteq\sqrt{\ia}$.

b) $\oga_\ia=\oga_\ib$ if and only if $\sqrt{\ia}=\sqrt{\ib}$.

c) $\oga_\ia=\oga_{\sqrt{\ia}}$.
\end{prop}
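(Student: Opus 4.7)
The plan is to prove the three statements in order, with part (a) carrying essentially all the content; parts (b) and (c) will then follow formally.

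For part (a), the ``if'' direction is a direct unpacking of the definition. Assume $\ib\subseteq\sqrt{\ia}$ and take any $R$-module $M$ and any $x\in\oga_\ia(M)$. By definition $\ia\subseteq\sqrt{(0:_Rx)}$; since $(0:_Rx)$ is an arbitrary ideal and $\sqrt{(0:_Rx)}$ is radical, taking radicals on both sides gives $\sqrt{\ia}\subseteq\sqrt{(0:_Rx)}$, and hence $\ib\subseteq\sqrt{\ia}\subseteq\sqrt{(0:_Rx)}$, so $x\in\oga_\ib(M)$. Thus $\oga_\ia\subseteq\oga_\ib$ as subfunctors.

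For the ``only if'' direction of (a), the idea is to test on $M\dfgl R/\ia$. The class $\overline{1}\dfgl 1+\ia$ satisfies $(0:_R\overline{1})=\ia$, so trivially $\ia\subseteq\sqrt{(0:_R\overline{1})}$, meaning $\overline{1}\in\oga_\ia(R/\ia)$. (In fact $\oga_\ia(R/\ia)=R/\ia$, since $\ia$ annihilates every element of $R/\ia$.) Applying the hypothesis $\oga_\ia\subseteq\oga_\ib$ to this element yields $\overline{1}\in\oga_\ib(R/\ia)$, i.e., $\ib\subseteq\sqrt{(0:_R\overline{1})}=\sqrt{\ia}$, as required. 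This is the only nontrivial step in the whole proposition.

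Part (b) is then a formality: $\oga_\ia=\oga_\ib$ is equivalent to the pair of subfunctor inclusions $\oga_\ia\subseteq\oga_\ib$ and $\oga_\ib\subseteq\oga_\ia$, which by (a) translate into $\ib\subseteq\sqrt{\ia}$ and $\ia\subseteq\sqrt{\ib}$. Taking radicals (which is order-preserving and idempotent on ideals) these two inclusions are equivalent to $\sqrt{\ib}\subseteq\sqrt{\ia}$ and $\sqrt{\ia}\subseteq\sqrt{\ib}$, i.e., to $\sqrt{\ia}=\sqrt{\ib}$. Part (c) is then the special case $\ib=\sqrt{\ia}$ of (b), using that $\sqrt{\sqrt{\ia}}=\sqrt{\ia}$. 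The only mild obstacle in the whole argument is picking the right test module in (a); $R/\ia$ is the evident and minimal choice, and no further machinery is needed.
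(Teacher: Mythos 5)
Your proof is correct and follows essentially the same route as the paper: the same use of the test module $R/\ia$ for the ``only if'' direction of (a), the same chain of radical inclusions for the ``if'' direction, and the same formal derivation of (b) and (c) from (a). The only cosmetic difference is that you spell out the step $\ia\subseteq\sqrt{(0:_Rx)}\Rightarrow\sqrt{\ia}\subseteq\sqrt{(0:_Rx)}$ which the paper leaves implicit.
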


\begin{proof}
a) If $\oga_\ia$ is a subfunctor of $\oga_\ib$, then $R/\ia=\oga_\ia(R/\ia)\subseteq\oga_\ib(R/\ia)$, and therefore $\ib\subseteq\sqrt{(0:_R1+\ia)}=\sqrt{\ia}$. Conversely, suppose that $\ib\subseteq\sqrt{\ia}$. If $M$ is an $R$-module and $x\in\oga_\ia(M)$, then $\ia\subseteq\sqrt{(0:_Rx)}$, hence $\ib\subseteq\sqrt{\ia}\subseteq\sqrt{(0:_Rx)}$, and therefore $x\in\oga_\ib(M)$. b) and c) follow immediately from a) and b), resp.
\end{proof}

\begin{prop}\label{def80}
a) $\ga_\ia=0$ if and only if $\oga_\ia=0$ if and only if $\ia=R$.

b) $\ga_\ia=\Id_{\catmod(R)}$ if and only if $\ia$ is nilpotent.

c) $\oga_\ia=\Id_{\catmod(R)}$ if and only if $\ia$ is nil.
\end{prop}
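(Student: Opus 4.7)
All three statements follow from the same template: evaluate the functor on a well-chosen test module (namely $R$ or $R/\ia$) and read off the condition on $\ia$.

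For a), the chain of inclusions $\ga_\ia\subseteq\oga_\ia$ from \ref{def20} shows that $\oga_\ia=0$ implies $\ga_\ia=0$, so it suffices to establish $\ia=R\Rightarrow\oga_\ia=0$ and $\ga_\ia=0\Rightarrow\ia=R$. The first implication is immediate: if $\ia=R$, then for any $R$-module $M$ and any $x\in\oga_\ia(M)$ we have $1\in R=\ia\subseteq\sqrt{(0:_Rx)}$, forcing $x=0$. For the converse, I would plug in the module $M\dfgl R/\ia$: every element $x\in R/\ia$ satisfies $\ia\cdot x=0$, so $\ia^1\subseteq(0:_Rx)$ and thus $R/\ia\subseteq\ga_\ia(R/\ia)$. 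If $\ga_\ia=0$, this forces $R/\ia=0$.

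For b), one direction is trivial: if $\ia^n=0$ then $\ia^n\subseteq(0:_Rx)$ for every element $x$ of every $R$-module, so $\ga_\ia=\Id_{\catmod(R)}$. For the converse, evaluate at $M\dfgl R$ and consider the element $1\in R=\ga_\ia(R)$: by the definition of $\ga_\ia$ there exists $n\in\N$ with $\ia^n\subseteq(0:_R1)=0$, so $\ia$ is nilpotent.

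Part c) is analogous, using the pointwise characterisation $\ia\subseteq\sqrt{(0:_Rx)}$. If $\ia$ is nil, then for every $x$ and every $r\in\ia$ some power $r^n$ vanishes, hence $r^n\in(0:_Rx)$ and $r\in\sqrt{(0:_Rx)}$; thus $\oga_\ia(M)=M$ for all $M$. Conversely, applying $\oga_\ia=\Id_{\catmod(R)}$ to $1\in R$ gives $\ia\subseteq\sqrt{(0:_R1)}=\sqrt{0}$, i.e.\ every element of $\ia$ is nilpotent.

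There is no real obstacle; each direction reduces to the definitions once $R$ or $R/\ia$ is selected as the test module. The only mild subtlety is remembering, in a) and c), that $\sqrt{(0:_Rx)}$ and $(0:_Rx)$ behave symmetrically with respect to $0$ and to $1$, which makes the evaluation at $R/\ia$ (resp.\ $R$) produce the desired radical-of-ideal conditions.
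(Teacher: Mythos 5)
Your proof is correct and follows essentially the same route as the paper's: the paper likewise deduces a) from the inclusion $\ga_\ia\subseteq\oga_\ia$ together with $\ga_\ia(R/\ia)=R/\ia$, and proves b) and c) by evaluating $\ga_\ia$ and $\oga_\ia$ at the module $R$. You have simply filled in the details left implicit by the paper's terse statement.
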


\begin{proof}
a) follows from the facts that $\ga_\ia$ is a subfunctor of $\oga_\ia$ and that $\ga_\ia(R/\ia)=R/\ia$. b) and c) follow by considering $\ga_\ia(R)$ and $\oga_\ia(R)$, resp.
\end{proof}

\begin{no}\label{cora120}
A) If $h\colon R\rightarrow S$ is a morphism of rings, then \[\ga_\ia(\bullet\res_R)=\ga_{\ia S}(\bullet)\res_R\quad\text{and}\quad\oga_\ia(\bullet\res_R)=\oga_{\ia S}(\bullet)\res_R\] as functors from $\catmod(S)$ to $\catmod(R)$.\smallskip

B) Let $\ib\subseteq R$ be an ideal. Considering the canonical morphism of rings $R\rightarrow R/\ib$ and setting $\overline{\ia}\dfgl(\ia+\ib)/\ib\subseteq R/\ib$, we get from A) \[\ga_\ia(\bullet\res_R)=\ga_{\overline{\ia}}(\bullet)\res_R\quad\text{and}\quad\oga_\ia(\bullet\res_R)=\oga_{\overline{\ia}}(\bullet)\res_R\] as functors from $\catmod(R/\ib)$ to $\catmod(R)$.
\end{no}


\section{Coincidence}

\begin{no}\label{cora105}
If $\ia$ is nil but not nilpotent, then $\ga_\ia\neq\oga_\ia$ (\ref{def80}); concrete such examples will be discussed in \ref{cora158}. As there exists ideals that are idempotent and nil, but not nilpotent (e.g. the maximal ideal of the $0$-dimensional local ring $T$ constructed in \cite[2.2]{qr}) it follows that even if $\ia$ is idempotent, then $\ga_\ia$ and $\oga_\ia$ need not coincide; a concrete such example with an idempotent non-nil ideal will be discussed in \ref{cora159} B).
\end{no}

\begin{prop}\label{cora180}
For an $R$-module $M$ we have \[\xymatrix{\ga_\ia(M)=M\ar@{=>}[r]&\assf_R(M)\subseteq\var(\ia)\ar@{<=>}[r]&\oga_\ia(M)=M.}\]
\end{prop}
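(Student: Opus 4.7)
The plan is to prove the first implication and the two directions of the equivalence separately. The key tool throughout is the observation that for $x\in M$, the minimal primes of $(0:_Rx)$ lie in $\assf_R(\langle x\rangle_R)\subseteq\assf_R(M)$, and conversely every element of $\assf_R(M)$ arises this way.

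For the implication $\ga_\ia(M)=M\Rightarrow\assf_R(M)\subseteq\var(\ia)$, I would take $\ip\in\assf_R(M)$, so $\ip$ is a minimal prime of $(0:_Rx)$ for some $x\in M$; in particular $(0:_Rx)\subseteq\ip$. By hypothesis there exists $n\in\N$ with $\ia^n\subseteq(0:_Rx)\subseteq\ip$, and primeness of $\ip$ gives $\ia\subseteq\ip$, i.e., $\ip\in\var(\ia)$.

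For the direction $\oga_\ia(M)=M\Rightarrow\assf_R(M)\subseteq\var(\ia)$, the argument is the same but easier: if $\ip$ is minimal over $(0:_Rx)$, then $\sqrt{(0:_Rx)}\subseteq\ip$, and the hypothesis gives $\ia\subseteq\sqrt{(0:_Rx)}\subseteq\ip$. For the converse $\assf_R(M)\subseteq\var(\ia)\Rightarrow\oga_\ia(M)=M$, fix $x\in M$; we must show $\ia\subseteq\sqrt{(0:_Rx)}$. If $(0:_Rx)=R$ then $x=0$ and the claim is trivial, so assume $(0:_Rx)\neq R$. Then $\sqrt{(0:_Rx)}$ is the intersection of the primes containing $(0:_Rx)$, and every such prime contains (by Zorn) a minimal prime of $(0:_Rx)$; hence $\sqrt{(0:_Rx)}$ equals the intersection of the minimal primes of $(0:_Rx)$. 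Each such minimal prime lies in $\assf_R(\langle x\rangle_R)\subseteq\assf_R(M)\subseteq\var(\ia)$, so contains $\ia$, and intersecting yields $\ia\subseteq\sqrt{(0:_Rx)}$ as required.

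There is no real obstacle: the argument is a straightforward unpacking of the definitions of $\ga_\ia$, $\oga_\ia$, and weak assassin, together with the classical description of the radical as the intersection of minimal primes. The only point requiring mild care is the degenerate case $(0:_Rx)=R$ in the last step, which must be handled before invoking the existence of minimal primes. The non-reversibility of the first arrow, already announced in \ref{cora105}, is not part of the statement to be proved here.
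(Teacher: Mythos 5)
Your proposal is correct and follows essentially the same line as the paper: for the equivalence the paper likewise passes through minimal primes of annihilators and uses the description of the radical, and for the first arrow the paper simply cites \cite[4.2]{asstor} rather than spelling out the (equally short) direct argument you give. The extra care about the degenerate case $(0:_Rx)=R$ and the explicit appeal to $\assf_R(\langle x\rangle_R)\subseteq\assf_R(M)$ are fine elaborations of what the paper leaves implicit.
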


\begin{proof}
The first implication holds by \cite[4.2]{asstor}. If $\assf_R(M)\subseteq\var(\ia)$ and $x\in M$, then $\ia\subseteq\sqrt{(0:_Rx)}$, hence $x\in\oga_\ia(M)$, implying $\oga_\ia(M)=M$. Conversely, suppose that $\oga_\ia(M)=M$ and let $\ip\in\assf_R(M)$. Then, there exists $x\in M$ with $\ip\in\min(0:_Rx)$. For $r\in\ia$ there exists $n\in\N$ with $r^nx=0$, hence $r^n\in(0:_Rx)\subseteq\ip$, yielding $r\in\ip$, and therefore $\ip\in\var(\ia)$.
\end{proof}

\begin{no}\label{cora190}
The first implication in \ref{cora180} need not be an equivalence. If it is an equivalence for every $R$-module $M$, then $\ia$ is called \textit{half-centred} (\cite[4.3]{asstor}). The notion of half-centredness will be put in a larger framework in \cite{part2}.
\end{no}

\begin{prop}\label{cora195}
a) $\ia$ is half-centred if and only if $\ia^n$ is so for some $n\in\N^*$.

b) If $\ib\subseteq R$ is a half-centred ideal with $\ia\subseteq\ib\subseteq\sqrt{\ia}$, then $\ia$ is half-centred.

c) If $\ia$ has a power of finite type, then it is half-centred.

d) If $\ia$ is generated by idempotents, then it is half-centred.
\end{prop}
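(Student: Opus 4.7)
The plan is to first recast half-centredness in a more workable form: combining \ref{cora180} with the definition in \ref{cora190}, the ideal $\ia$ is half-centred if and only if the implication $\oga_\ia(M)=M \Rightarrow \ga_\ia(M)=M$ holds for every $R$-module $M$. Since $\oga_\ia(\oga_\ia(M)) = \oga_\ia(M)$ for every $M$ (the annihilator of $x$ is the same whether $x$ is viewed in $M$ or in $\oga_\ia(M)$), applying the above implication to $\oga_\ia(M)$ yields $\oga_\ia(M) \subseteq \ga_\ia(M)$; combined with the converse inclusion from \ref{def20}, this shows that half-centredness of $\ia$ is equivalent to the equality of subfunctors $\ga_\ia = \oga_\ia$. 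This equivalence is the only conceptual input; the four parts then become short manipulations.

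For a), both torsion functors are invariant under passing to a positive power of $\ia$: \ref{def50} gives $\ga_\ia = \ga_{\ia^n}$ for every $n \in \N^*$, while $\sqrt{\ia^n} = \sqrt{\ia}$ together with \ref{def40} b) gives $\oga_\ia = \oga_{\ia^n}$. Hence $\ga_\ia = \oga_\ia$ if and only if $\ga_{\ia^n} = \oga_{\ia^n}$. For b), assuming $\ia \subseteq \ib \subseteq \sqrt{\ia}$, I would chain
\[
\ga_\ia \subseteq \oga_\ia = \oga_\ib = \ga_\ib \subseteq \ga_\ia,
\]
where the first equality comes from $\sqrt{\ia} = \sqrt{\ib}$ via \ref{def40} b), the second from half-centredness of $\ib$, and the final inclusion is automatic since $\ib^n x = 0$ implies $\ia^n x = 0$ whenever $\ia \subseteq \ib$.

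For c), using a) I would reduce to the case that $\ia$ itself is of finite type, say $\ia = \langle r_1, \ldots, r_k\rangle_R$. For $x \in \oga_\ia(M)$ each $r_i$ lies in $\sqrt{(0:_R x)}$, so one can pick $n_i \in \N$ with $r_i^{n_i} x = 0$; multinomial expansion then yields $\ia^{n_1 + \cdots + n_k} x = 0$, hence $x \in \ga_\ia(M)$ and $\oga_\ia = \ga_\ia$. For d), if $\ia = \langle E\rangle_R$ with $E \subseteq \idem(R)$ and $x \in \oga_\ia(M)$, then for each $e \in E$ one has $e^n x = 0$ for some $n \in \N$; but $e^n = e$ by idempotency, so $ex = 0$. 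Thus $Ex = 0$ and therefore $\ia x = 0$, giving $x \in \ga_\ia(M)$.

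The main and essentially only new obstacle is the initial reformulation of half-centredness as $\ga_\ia = \oga_\ia$; once it is in hand, each of a)--d) follows directly from results already established in Sections 1 and 3, without invoking anything from the Mittag-Leffler preparations.
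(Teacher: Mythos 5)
Your proof is correct, but it takes a genuinely different route from the paper. The paper proves \ref{cora195} by working directly with the weak-assassin reformulation from \ref{cora180}: in a), b) and d) it shows directly that $\assf_R(M)\subseteq\var(\ia)$ forces $\ga_\ia(M)=M$, and for c) it cites an external result, \cite[4.3\,C)]{asstor}. You instead begin by establishing that half-centredness of $\ia$ is equivalent to the functor equality $\ga_\ia=\oga_\ia$ — which is exactly the content of Theorem \ref{cora200}, stated and proved in the paper only \emph{after} \ref{cora195} — and then reduce all four parts to manipulations of $\ga_\ia$ and $\oga_\ia$. Your derivation of that equivalence (test the half-centredness implication on the module $\oga_\ia(M)$, using idempotency of $\oga_\ia$, then push down via $\ga_\ia(\oga_\ia(M))\subseteq\ga_\ia(M)$) is a valid alternative to the paper's own proof of \ref{cora200}, which tests on the cyclic module $\langle x\rangle_R$; both exploit the same idempotency, just packaged differently. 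Since \ref{cora200}'s proof in the paper depends only on \ref{cora180} and not on \ref{cora195}, there is no circularity in your reordering. What your approach buys: part c) becomes self-contained (your reduction via a) to the finite-type case plus the pigeonhole/multinomial bound $\ia^{n_1+\cdots+n_k}x=0$ replaces the citation of \cite{asstor}), and the chains in b) and d) are cleaner to state once one thinks in terms of $\ga_\ia=\oga_\ia$. What the paper's approach buys: \ref{cora195} stays logically independent of \ref{cora200}, which matches the paper's expository ordering and keeps the weak-assassin characterisation of half-centredness in the foreground, a point of view it returns to in \cite{part2}.
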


\begin{proof}
a) holds since $\var(\ia)=\var(\ia^n)$ and $\ga_\ia=\ga_{\ia^n}$ (\ref{def50}). b) holds since $\var(\ia)=\var(\ib)$ and $\ga_\ib$ is a subfunctor of $\ga_\ia$ (\ref{def50}). c) follows from a) and \cite[4.3 C)]{asstor}. d) Let $M$ be an $R$-module with $\assf_R(M)\subseteq\var(\ia)$. If $x\in M$, then $\ia\subseteq\bigcap\assf_R(M)\subseteq\bigcap\min(0:_Rx)=\sqrt{(0:_Rx)}$, and as $\ia$ is generated by idempotents it follows $\ia\subseteq(0:_Rx)$, hence $x\in\ga_\ia(M)$.
\end{proof}

\begin{thm}\label{cora200}
We have $\ga_\ia=\oga_\ia$ if and only if $\ia$ is half-centred.
\end{thm}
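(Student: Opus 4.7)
The plan is to deduce both directions from Proposition \ref{cora180}, which establishes
\[\ga_\ia(M)=M\ \Longrightarrow\ \assf_R(M)\subseteq\var(\ia)\ \Longleftrightarrow\ \oga_\ia(M)=M,\]
together with the elementwise nature of the two functors. Recall that by \ref{cora190}, $\ia$ is half-centred precisely when the first implication above is an equivalence for every $M$.

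For the ``only if'' direction, if $\ga_\ia=\oga_\ia$ as subfunctors of $\Id_{\catmod(R)}$, then for any $R$-module $M$ the conditions $\ga_\ia(M)=M$ and $\oga_\ia(M)=M$ are the same, and Proposition \ref{cora180} at once identifies them with $\assf_R(M)\subseteq\var(\ia)$. Thus the first implication of \ref{cora180} is an equivalence for every $M$, which is the definition of half-centredness.

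For the ``if'' direction, suppose $\ia$ is half-centred and let $M$ be an $R$-module. The inclusion $\ga_\ia(M)\subseteq\oga_\ia(M)$ is known (\ref{def20}), so I need only the reverse inclusion. The key step is to reduce to the case $\oga_\ia(M)=M$ by passing to a cyclic submodule: for $x\in\oga_\ia(M)$, set $N\dfgl\langle x\rangle_R\subseteq M$. For any $rx\in N$ one has $(0:_Rx)\subseteq(0:_Rrx)$, hence $\ia\subseteq\sqrt{(0:_Rx)}\subseteq\sqrt{(0:_Rrx)}$, so $rx\in\oga_\ia(M)$. This shows $N\subseteq\oga_\ia(M)$ and, applied to $N$ in place of $M$, gives $\oga_\ia(N)=N$.

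Now Proposition \ref{cora180} yields $\assf_R(N)\subseteq\var(\ia)$, and half-centredness of $\ia$ then gives $\ga_\ia(N)=N$. In particular $x\in\ga_\ia(N)\subseteq\ga_\ia(M)$, proving $\oga_\ia(M)\subseteq\ga_\ia(M)$. Since there is no genuine obstacle beyond spotting the reduction to cyclic submodules, the argument is short; the only thing to be careful about is verifying that $\langle x\rangle_R$ is itself $\oga_\ia$-torsion, which follows from the monotonicity of $(0{:}_R\cdot)$ under passage to multiples.
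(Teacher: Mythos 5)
Your proof is correct and follows essentially the same route as the paper: the ``only if'' direction reads off half-centredness directly from Proposition \ref{cora180}, and the ``if'' direction passes to the cyclic submodule $\langle x\rangle_R$ for $x\in\oga_\ia(M)$, applies \ref{cora180} to it, and invokes half-centredness to conclude $x\in\ga_\ia(M)$. The only difference is that you spell out explicitly why $\oga_\ia(\langle x\rangle_R)=\langle x\rangle_R$ (monotonicity of annihilators under passage to multiples), a step the paper leaves implicit.
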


\begin{proof}
If $\ga_\ia=\oga_\ia$, then $\ia$ is half-centred by \ref{cora180}. Conversely, suppose that $\ia$ is half-centred. Let $M$ be an $R$-module. If $x\in\oga_\ia(M)$, then $\oga_\ia(\langle x\rangle_R)=\langle x\rangle_R$, so \ref{cora180} implies $\ga_\ia(\langle x\rangle_R)=\langle x\rangle_R$ and therefore $x\in\ga_\ia(M)$. It follows $\oga_\ia(M)=\ga_\ia(M)$ and thus the claim.
\end{proof}

\begin{cor}\label{cora100}
a) If $\ia$ has a power of finite type or is generated by idempotents, then $\ga_\ia=\oga_\ia$.

b) If $R$ is noetherian or absolutely flat, then $\ga_\ia=\oga_\ia$.

c) If $\ib\subseteq R$ is an ideal with $\ia\subseteq\ib\subseteq\sqrt{\ia}$ and $\ga_\ib=\oga_\ib$, then $\ga_\ia=\oga_\ia$.

d) If $M$ is a noetherian $R$-module, then $\ga_\ia(M)=\oga_\ia(M)$.
\end{cor}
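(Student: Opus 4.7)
The strategy is to observe that parts \textbf{a)}, \textbf{b)}, and \textbf{c)} all reduce immediately to Theorem \ref{cora200} together with Proposition \ref{cora195}, so only \textbf{d)} requires an independent argument.

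For \textbf{a)}, Proposition \ref{cora195} c) and d) show that an ideal with a power of finite type or one generated by idempotents is half-centred, so Theorem \ref{cora200} gives $\ga_\ia=\oga_\ia$. For \textbf{b)}, if $R$ is noetherian then $\ia$ itself is of finite type, hence has a power of finite type, and we invoke a); if $R$ is absolutely flat then Proposition \ref{idem30} shows every ideal is generated by idempotents, and again a) applies. For \textbf{c)}, the hypothesis $\ga_\ib=\oga_\ib$ is equivalent to $\ib$ being half-centred (Theorem \ref{cora200}), so Proposition \ref{cora195} b) transfers half-centredness to $\ia$, and Theorem \ref{cora200} concludes.

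The interesting case is \textbf{d)}, where $\ia$ need not be half-centred, but we have a noetherian module. The plan is to reduce to cyclic submodules: since $\ga_\ia(M)\subseteq\oga_\ia(M)$ always holds, I pick $x\in\oga_\ia(M)$ and aim to show $x\in\ga_\ia(M)$. Since $M$ is noetherian, so is the submodule $\langle x\rangle_R\cong R/(0:_Rx)$. The key observation is that $R$-submodules of $R/(0:_Rx)$ coincide with ideals of the quotient ring $S\dfgl R/(0:_Rx)$ (the $R$-action factors through $S$), so $S$ is noetherian not merely as an $R$-module but as a ring. By hypothesis the image of $\ia$ in $S$ lies in $\sqrt{0_S}$, and the nilradical of a noetherian ring is nilpotent; hence there is $n\in\N$ with $\ia^n\subseteq(0:_Rx)$, giving $x\in\ga_\ia(M)$.

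The only subtle point is the ring-theoretic upgrade in \textbf{d)}: noetherianness of $M$ as an $R$-module is in general weaker than noetherianness of $R$, but passing to the cyclic quotient $R/(0:_Rx)$ turns the module-theoretic hypothesis into a genuine noetherian ring structure, which is precisely what is needed to apply the classical fact that a nil ideal in a noetherian ring is nilpotent.
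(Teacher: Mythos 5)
Your proofs of a), b), and c) are exactly the paper's: all three reduce to Theorem \ref{cora200} and Proposition \ref{cora195}, with \ref{idem30} supplying the absolutely flat case. For d) your route diverges from the paper's. The paper invokes \ref{cora120} B) to view $M$ as a module over the noetherian ring $R/(0:_RM)$ and then simply cites part b), so its argument ultimately runs back through the half-centredness machinery (\ref{cora200}, \ref{cora195}) and the external reference behind it. You instead work element-wise: for $x\in\oga_\ia(M)$ you pass to the cyclic submodule $\langle x\rangle_R\cong R/(0:_Rx)$, observe (correctly) that $R$-submodules of this quotient are the same as its ideals so that noetherianness of the module promotes to noetherianness of the ring $S=R/(0:_Rx)$, and then conclude directly because the nilradical of a noetherian ring is nilpotent and the image of $\ia$ lies in $\sqrt{0_S}$. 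This is a genuinely more self-contained argument for d): it bypasses the half-centred framework entirely and replaces the chain d) $\Rightarrow$ b) $\Rightarrow$ a) $\Rightarrow$ \ref{cora200}/\ref{cora195} with one classical commutative-algebra fact. The paper's version is shorter on the page because it reuses b) as a black box; yours is more transparent about the mechanism and would also furnish an independent proof of the noetherian case of b). Both are correct.
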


\begin{proof}
a) and c) follow from \ref{cora200} and \ref{cora195}. b) follows from a) and \ref{idem30}. d) follows from \ref{cora120} B) and b) by considering $M$ as a module over the noetherian ring $R/(0:_RM)$ (\cite[VIII.1.3 Proposition 6 Corollaire]{a}).
\end{proof}

\begin{no}\label{cora115}
A) If $\ib\subseteq R$ is an ideal with $\ia\subseteq\ib\subseteq\sqrt{\ia}$ that has a power of finite type, then $\ga_\ia=\oga_\ia$ by \ref{cora100} a) and c). This hypothesis is strictly weaker than $\ia$ having a power of finite type. In fact, there exists an ideal without powers of finite type whose radical is principal. Indeed, let $p$ be a prime number, let \[S\dfgl\Z[(X_i)_{i\in\N}]/\langle pX_{i+1}-X_i\mid i\in\N\rangle,\] let $Y_i$ denote the canonical image of $X_i$ in $S$ for $i\in\N$, let $\ic\dfgl\langle Y_i\mid i\in\N\rangle_S$, let $\im\dfgl\langle p\rangle_S+\ic$, and let $\ia\dfgl\langle p^2\rangle_S+\ic$. Then, $R\dfgl S_\im$ is a $2$-dimensional valuation ring whose maximal ideal $\im_\im=\langle p\rangle_R$ is principal, and $\spec(R)=\{0,\ic_\im,\im_\im\}$ (\cite[2.9]{qr}). Clearly, $\ia_\im\subseteq\im_\im$. If $\ia_\im\subseteq\ic_\im$, then $p^2\in\ic_\im$, and as $\ic_\im$ is prime it follows that $p\in\ic_\im$, yielding the contradiction $\ic_\im=\im_\im$; therefore, $\ia_\im\not\subseteq\ic_\im$. This implies that $\var(\ia_\im)=\{\im_\im\}$, thus $\sqrt{\ia_\im}=\im_\im$ is principal.

We assume now that there exists $n\in\N^*$ such that $\ia_\im^n$ is of finite type. Then, there exists $k\in\N$ such that $\ia_\im^n$ is generated by 
\[\bigcup_{j=1}^n\{p^{2j}\cdot\prod_{l=1}^{n-j}Y_{i_l}\mid i_1,\ldots,i_{n-j}\in[0,k]\}.\] Now we consider $Y_{k+1}^n\in\ia_\im^n$. There exist $h\in S\setminus\im$, $f\in S$ and $g\in\langle Y_i\mid i\in[0,k]\rangle_S$ with $hY_{k+1}^r=p^2f+g$. We write the elements $h$, $f$ and $g$ of $S$ as classes of polynomials in indeterminates whose indices are at least $k+1$, and then compare the coefficients of $Y_{k+1}^n$ in these two classes of polynomials in $S$. On the left side, since $h\notin\im$, this coefficient is not a multiple of $p$. On the right side, the coefficients of $Y_{k+1}^n$ in $fp^2$ and in $g$ are multiples of $p$. This is a contradiction, and thus no power of $\ia_\im$ is of finite type.\smallskip

B) If $\ib\subseteq R$ is an ideal generated by idempotents with $\ia\subseteq\ib\subseteq\sqrt{\ia}$, then $\ia=\ib$. Therefore, combining \ref{cora100} a) and c) yields no further criterion for $\ga_\ia$ and $\oga_\ia$ to coincide.
\end{no}


\section{Radicality}

\begin{prop}\label{cora140}
a) The functor $\oga_\ia$ is a radical.

b) If $\ia$ is half-centred or idempotent, then $\ga_\ia$ is a radical.
\end{prop}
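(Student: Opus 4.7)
\medskip

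The plan is to verify the radical condition $F(M/F(M)) = 0$ directly in each case, exploiting the explicit definitions of $\oga_\ia$ and $\ga_\ia$. Part (a) should follow from a straightforward chase of the defining condition, whereas part (b) should split into two cases: the half-centred case reduces to (a) via Theorem \ref{cora200}, and the idempotent case uses the collapse $\ia^n = \ia$ to rewrite the small torsion as a simple annihilator.

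For (a), I would take $x \in M$ whose class modulo $\oga_\ia(M)$ lies in $\oga_\ia(M/\oga_\ia(M))$ and show $x \in \oga_\ia(M)$. Given $r \in \ia$, by assumption there exists $n \in \N$ with $r^n x \in \oga_\ia(M)$, hence $\ia \subseteq \sqrt{(0:_R r^n x)}$. In particular $r$ itself lies in this radical, so there is $m \in \N$ with $r^{m+n} x = 0$. Thus $r \in \sqrt{(0:_R x)}$, and since $r \in \ia$ was arbitrary, $x \in \oga_\ia(M)$.

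For (b), if $\ia$ is half-centred, then Theorem \ref{cora200} gives $\ga_\ia = \oga_\ia$, which is a radical by (a). If $\ia$ is idempotent, then $\ia^n = \ia$ for all $n \in \N^*$, so the defining condition simplifies to $\ga_\ia(M) = \{x \in M \mid \ia x = 0\}$ for every $R$-module $M$. Now, given $x \in M$ with $\bar x \in \ga_\ia(M/\ga_\ia(M))$, there is $n \in \N$ with $\ia^n x \subseteq \ga_\ia(M)$; by idempotency this reads $\ia x \subseteq \ga_\ia(M)$, so $\ia \cdot (\ia x) = 0$, i.e., $\ia^2 x = 0$, and again by idempotency $\ia x = 0$, so $x \in \ga_\ia(M)$, as required.

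I do not expect any serious obstacle here; the only subtle point is making sure in (a) to extract $r$ itself from $\sqrt{(0:_R r^n x)}$ (not merely some other element of $\ia$), so as to get a power of $r$ annihilating $x$ rather than some mixed product. In the idempotent case, the quiet but essential observation is that $\ia^n = \ia$ for all $n \geq 1$ collapses both the quantifier over $n$ in the definition of $\ga_\ia$ and the iterated application needed to pass from $\ia x \subseteq \ga_\ia(M)$ back to $\ia x = 0$.
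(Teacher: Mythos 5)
Your proof is correct and follows essentially the same route as the paper: part (a) by extracting a power of $r$ itself from the radical condition to conclude $r\in\sqrt{(0:_Rx)}$, and part (b) by reducing the half-centred case to (a) via Theorem \ref{cora200} and using the collapse $\ia^n=\ia$ to identify $\ga_\ia(M)$ with the annihilator $(0:_M\ia)$ in the idempotent case. No gaps; the "subtle point" you flag about needing $r$ itself in the radical is exactly the step the paper also makes.
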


\begin{proof}
a) Let $M$ be an $R$-module, and let $x\in M$ with $x+\oga_\ia(M)\in\oga_\ia(M/\oga_\ia(M))$. For $r\in\ia$ there exists $n\in\N$ with $r^nx\in\oga_\ia(M)$, hence there exists $m\in\N$ with $r^mx=0$. This implies $x\in\oga_\ia(M)$ and thus the claim. b) If $\ia$ is half-centred, then this is clear from a) and \ref{cora200}. Suppose that $\ia$ is idempotent. Let $M$ be an $R$-module, and let $x\in M$ with $x+\ga_\ia(M)\in\ga_\ia(M/\ga_\ia(M))$. Then, $\ia x\subseteq\ga_\ia(M)$, hence $\ia x=\ia^2x=0$, thus $x\in\ga_\ia(M)$, and so we get the claim.
\end{proof}

\begin{prop}\label{cora150}
Let $\ib\subseteq R$ be an ideal, and let $\overline{\ia}\dfgl(\ia+\ib)/\ib\subseteq R/\ib$. If $\ga_\ia$ is a radical, then so is $\ga_{\overline{\ia}}$.
\end{prop}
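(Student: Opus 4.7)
The plan is to reduce the statement to the radicality of $\ga_\ia$ by the compatibility of small torsion with scalar restriction along $R\to R/\ib$ recorded in \ref{cora120} B). That identity says that for every $R/\ib$-module $N$ the underlying set of $\ga_{\overline{\ia}}(N)$ equals that of $\ga_\ia(N\res_R)$; so the two torsion submodules agree (as subsets of $N$) and only the ring-action on them differs. This is the only nontrivial input needed.

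Concretely, I would start with an $R/\ib$-module $M$ and an element $x\in M$ whose class $x+\ga_{\overline{\ia}}(M)$ lies in $\ga_{\overline{\ia}}\bigl(M/\ga_{\overline{\ia}}(M)\bigr)$, and prove $x\in\ga_{\overline{\ia}}(M)$. Viewing everything along $R\to R/\ib$, \ref{cora120} B) applied to $M$ gives $\ga_{\overline{\ia}}(M)=\ga_\ia(M\res_R)$ as submodules of $M\res_R$. Hence the $R/\ib$-module $M/\ga_{\overline{\ia}}(M)$ coincides, after scalar restriction, with $M\res_R/\ga_\ia(M\res_R)$. Applying \ref{cora120} B) once more, now to $M/\ga_{\overline{\ia}}(M)$, we rewrite the hypothesis on $x$ as
\[x+\ga_\ia(M\res_R)\in\ga_\ia\bigl(M\res_R/\ga_\ia(M\res_R)\bigr).\]

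Since $\ga_\ia$ is a radical by assumption, the right-hand side is zero, so $x\in\ga_\ia(M\res_R)=\ga_{\overline{\ia}}(M)$, which is what was to be proved. The only place any care is needed is to check that the scalar-restriction identities are compatible with passing to the quotient by the torsion submodule, but this is immediate once the first application of \ref{cora120} B) shows that the torsion submodules coincide as subsets; there is no genuine obstacle.
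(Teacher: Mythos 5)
Your proof is correct and follows essentially the same route as the paper's: both identify $\ga_{\overline{\ia}}$ with $\ga_\ia$ via scalar restriction along $R\to R/\ib$ (\ref{cora120}~B)), apply this twice to rewrite $\ga_{\overline{\ia}}(M/\ga_{\overline{\ia}}(M))$ as $\ga_\ia(M\res_R/\ga_\ia(M\res_R))$, and conclude from the radicality of $\ga_\ia$.
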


\begin{proof}
If $M$ is an $R/\ib$-module, then \ref{cora120} B) implies \[\ga_{\overline{\ia}}(M/\ga_{\overline{\ia}}(M))\res_R=\ga_\ia(M\res_R/\ga_\ia(M\res_R))=0\] and hence we get the claim.
\end{proof}

\begin{lemma}\label{cora155}
If $\ib\subseteq R$ is an ideal with $(\ia+\ib)/\ib\subseteq\ga_\ia(R/\ib)\neq R/\ib$, then $\ga_\ia((R/\ib)/\ga_\ia(R/\ib))\neq 0$.
\end{lemma}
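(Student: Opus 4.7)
The plan is very short because the statement collapses once one identifies $(\ia+\ib)/\ib$ with the submodule $\ia\cdot(R/\ib)$ of $R/\ib$. Write $N\dfgl R/\ib$ and $T\dfgl\ga_\ia(N)$, so the hypotheses read $\ia\cdot(1+\ib)\subseteq T$ and $T\neq N$. Since $T$ is a sub-$R$-module of $N$ and every element of $N$ is of the form $r(1+\ib)$ for some $r\in R$, the first hypothesis upgrades automatically to $\ia\cdot N\subseteq T$; this is the only real content I have to extract from $(\ia+\ib)/\ib\subseteq T$.

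Given this, I would finish as follows. Pick any $y\in N\setminus T$, possible by $T\neq N$, and let $\bar y$ be its class in $N/T$. From $\ia N\subseteq T$ we have $\ia\bar y=0$, so $\ia^1\subseteq(0:_R\bar y)$ and therefore $\bar y\in\ga_\ia(N/T)$ by the very definition of the small torsion functor. Since $y\notin T$, the class $\bar y$ is nonzero, hence $\ga_\ia((R/\ib)/\ga_\ia(R/\ib))\neq 0$.

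There is no substantive obstacle; the lemma is a bookkeeping statement designed to produce counterexamples to radicality in the next step. The only point worth double-checking is the passage from $\ia\cdot(1+\ib)\subseteq T$ to $\ia N\subseteq T$, which is immediate from the $R$-module structure of $T$, and the fact that $n=1$ already suffices in the definition of $\ga_\ia$ so no power is needed.
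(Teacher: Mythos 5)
Your proof is correct and follows essentially the same route as the paper: the paper takes the specific element $(1+\ib)+\ga_\ia(R/\ib)$, notes it is nonzero because $\ga_\ia(R/\ib)\neq R/\ib$, and that $\ia$ annihilates it because $(\ia+\ib)/\ib\subseteq\ga_\ia(R/\ib)$; you merely package the same observation by first upgrading to $\ia\cdot(R/\ib)\subseteq\ga_\ia(R/\ib)$ (so that in fact $\ga_\ia$ of the quotient is the whole quotient) and then picking an arbitrary $y\notin\ga_\ia(R/\ib)$.
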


\begin{proof}
As $\ga_\ia(R/\ib)\neq R/\ib$ we have $(1+\ib)+\ga_\ia(R/\ib)\in((R/\ib)/\ga_\ia(R/\ib))\setminus 0$. Moreover, $\ia((1+\ib)+\ga_\ia(R))=(\ia+\ib)+\ga_\ia(R/\ib)=0\in(R/\ib)/\ga_\ia(R/\ib)$, hence $(1+\ib)+\ga_\ia(R/\ib)\in\ga_\ia((R/\ib)/\ga_\ia(R/\ib))\setminus 0$.
\end{proof}

\begin{no}\label{cora158}
A) Let $R$ and $\ia$ be as in \ref{nil20} A). Let $\ib\dfgl\sum_{i\in\N}\ia^iY_i\subseteq R$. If $i\in\N$, then $\ia^iY_i\subseteq\ib$, hence $Y_i+\ib\in\ga_\ia(R/\ib)$, and thus $\ia/\ib\subseteq\ga_\ia(R/\ib)$. Moreover, $\ia^i$ contains every squarefree monomial with $i$ factors, hence $\ia^i\not\subseteq\ib$, because in every squarefree monomial with $i$ factors in $\ib$ occurs $Y_{i-1}$. It follows that $\ga_\ia(R/\ib)\neq R/\ib$, and hence $\ga_\ia$ is not a radical by \ref{cora155}. (More precisely, since $\ia$ is the unique maximal ideal of $R$ we get $\ga_\ia(R/\ib)=\ia/\ib$, and therefore $\ga_\ia((R/\ib)/\ga_\ia(R/\ib))=\ga_\ia(R/\ia)=R/\ia$.) (This example is due to Pham Hung Quy.)\smallskip

B) Let $R$ and $\ia$ be as in \ref{nil20} B). Clearly, $\ga_\ia(R)=\ia$, so \ref{cora155} (applied with $\ib=0$) implies $\ga_\ia(R/\ga_\ia(R))\neq 0$, and thus $\ga_\ia$ is not a radical.\smallskip

C) Let $R$ and $\ia$ be as in \ref{nil25}. Let $i\in\N$. Let $J\subseteq\N$ be a subset of cardinality $2i+2$, so that we may write $J=\{m_1,\ldots,m_{2i+2}\}$ with $m_j<m_{j+1}$ for every $j\in[1,2i+1]$. Then, $2i<m_{2i+2}$, hence $Y_{m_{2i+2}}Y_i=0$, and therefore $(\prod_{j\in J}Y_j)Y_i=0$. It follows that $\ia^{2i+2}Y_i=0$, and therefore $Y_i\in\ga_\ia(R)$. This shows $\ia\subseteq\ga_\ia(R)$. As $\ia$ is the unique maximal ideal of $R$ but not nilpotent, we get $\ga_\ia(R)=\ia$. As in B) it follows that $\ga_\ia$ is not a radical.\smallskip

D) Let $K$ be a field, let \[R\dfgl K[(X_i)_{i\in\N}]/\langle X_i^{i+1}\mid i\in\N\rangle,\] let $Y_i$ denote the canonical image of $X_i$ in $R$ for $i\in\N$, and let $\ia\dfgl\langle Y_i\mid i\in\N\rangle$. Factoring out $\ib\dfgl\langle Y_iY_j\mid i,j\in\N,i\neq j\rangle$, it follows from B) and \ref{cora150} that $\ga_\ia$ is not a radical. (For a direct proof on use of \ref{cora155} we may observe that $\ia/\ib\subseteq\ga_\ia(R/\ib)\neq R/\ib$.) (Note that $R$ is a $0$-dimensional local ring whose maximal ideal $\ia$ is neither quasinilpotent nor T-nilpotent.)
\end{no}

\begin{no}\label{cora159}
A) Let $R$, $\ia$ and $\ib$ be as in \ref{idem20} C). Then, $\ia/\ib\subseteq R/\ib$ is generated by idempotents, hence $\ga_{\ia/\ib}=\oga_{\ia/\ib}$ is a radical (\ref{cora100} a), \ref{cora140} a)).\smallskip

B) Let $R$, $\ia$ and $\ic$ be as in \ref{idem20} C). Let $\overline{R}\dfgl R/\ic$, let $\overline{\ia}\dfgl\ia/\ic$, and let $Y_i$ denote the canonical image of $X_i$ in $\overline{R}$ for $i\in\Z$. Then, $\overline{\ia}\subseteq\overline{R}$ is idempotent, hence $\ga_{\overline{\ia}}$ is a radical (\ref{cora140} b)). We consider the $\overline{R}$-module $M\dfgl\overline{R}/\langle Y_0\rangle_{\overline{R}}$. For $i\in\N$ we have $Y_iM=0$ and $Y_{-i}^{2^i}=Y_{-i+i}=Y_0$, hence $Y_{-i}^{2^i}M=0$, implying $\oga_{\overline{\ia}}(M)=M$. We assume now that there exists $g\in\overline{R}\setminus\langle Y_0\rangle_{\overline{R}}$ with $\overline{\ia}g\in\langle Y_0\rangle_{\overline{R}}$. Then, there occurs in $g$ a monomial $t$ that is not a multiple of $Y_0$, but such that $Y_it$ is a multiple of $Y_0$ for every $i\in\Z_{<0}$. So, there exists a strictly increasing sequence $(l_j)_{j=0}^k$ in $\Z_{<0}$ with $t=\prod_{j=0}^kY_{l_j}$, and choosing $i<l_0$ we get the contradiction that $Y_i\cdot\prod_{j=0}^kY_{l_j}$ is a multiple of $Y_0$. This shows that $\ga_{\overline{\ia}}(M)=0$, and thus it follows that $\ga_{\overline{\ia}}\neq\oga_{\overline{\ia}}$.
\end{no}

\begin{no}\label{cora160}
A) We saw in \ref{cora159} B) that $\ga_\ia$ may be a radical but $\ga_\ia\neq\oga_\ia$. Therefore, $\oga_\ia$ need not be the smallest radical containing $\ga_\ia$ in the sense of \cite[VI.1.5]{sten}.\smallskip

B) It may happen even for a nil ideal that $\ga_\ia$ is a radical but $\ga_\ia\neq\oga_\ia$. Indeed, if $\ia$ is idempotent, nil and nonzero, then $\ga_\ia$ is a radical (\ref{cora140} b)), but $\ga_\ia(R)=\linebreak(0:_R\ia)\neq R=\oga_\ia(R)$ (\ref{def80} c)), hence $\ga_\ia\neq\oga_\ia$. An example of a $0$-dimensional local ring whose maximal ideal fulfils these conditions is given in \cite[2.2]{qr}.
\end{no}

\begin{qu}\label{cora170}
The preceding results give rise to the following question:
\begin{aufz}
\item[$(*)$] \textit{For which ideals $\ia$ is $\ga_\ia$ a radical?}
\end{aufz}
\end{qu}


\section{Serre classes}

\begin{no}\label{serre10}
A class $\mathcal{S}$ of $R$-modules is called a \textit{Serre class} if $0\in\mathcal{S}$ and for every exact sequence $0\rightarrow N\rightarrow M\rightarrow P\rightarrow 0$ of $R$-modules we have $M\in\mathcal{S}$ if and only if $N,P\in\mathcal{S}$.
\end{no}

\begin{no}\label{serre20}
We set \[\mathcal{T}_\ia\dfgl\{M\in\ob(\catmod(R))\mid\ga_\ia(M)=M\}\] and \[\overline{\mathcal{T}}_\ia\dfgl\{M\in\ob(\catmod(R))\mid\oga_\ia(M)=M\}.\] Clearly, $0\in\mathcal{T}_\ia\subseteq\overline{\mathcal{T}}_\ia$.
\end{no}

\begin{prop}\label{serre30}
The class $\overline{\mathcal{T}}_\ia$ is a Serre class.
\end{prop}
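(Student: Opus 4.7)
The plan is to verify the two defining properties of a Serre class directly from the elementwise definition of $\oga_\ia$. Closure under submodules and quotients is immediate: given a short exact sequence $0\to N\to M\to P\to 0$ with $M\in\overline{\mathcal{T}}_\ia$, any $x\in N$ already lies in $\oga_\ia(M)=M$ and so satisfies $\ia\subseteq\sqrt{(0:_Rx)}$, yielding $N\in\overline{\mathcal{T}}_\ia$. For $P=M/N$, any class $x+N$ has $(0:_Rx)\subseteq(0:_Rx+N)$, so $\ia\subseteq\sqrt{(0:_Rx)}\subseteq\sqrt{(0:_Rx+N)}$, giving $P\in\overline{\mathcal{T}}_\ia$.

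For the converse (the nontrivial direction), suppose $N,P\in\overline{\mathcal{T}}_\ia$ and let $x\in M$ with image $\overline{x}\in P$. Pick $r\in\ia$. Since $\overline{x}\in\oga_\ia(P)$, there is $n\in\N$ with $r^n\overline{x}=0$, i.e., $r^nx\in N$. Since $r^nx\in N=\oga_\ia(N)$, there is $m\in\N$ with $r^m(r^nx)=0$, hence $r^{n+m}\in(0:_Rx)$, so $r\in\sqrt{(0:_Rx)}$. As $r\in\ia$ was arbitrary, $\ia\subseteq\sqrt{(0:_Rx)}$, i.e., $x\in\oga_\ia(M)$. This shows $M\in\overline{\mathcal{T}}_\ia$.

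There is no genuine obstacle here; the extension-closure step is essentially the same radical-of-annihilator manipulation already carried out in the proof of \ref{cora140}\,a) showing that $\oga_\ia$ is a radical. An alternative route would be to invoke the characterisation $M\in\overline{\mathcal{T}}_\ia\Leftrightarrow\supp_R(M)\subseteq\var(\ia)$ from \ref{def30}\,B) together with the fact that support is additive on short exact sequences, but the elementwise argument above is self-contained and just as short.
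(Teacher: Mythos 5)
Your elementwise argument is correct and complete: you verify closure under submodules, quotients, and extensions directly from the definition $\oga_\ia(M)=\{x\in M\mid\ia\subseteq\sqrt{(0:_Rx)}\}$, with the extension step being the only place where any work is needed (the manipulation $r^n x\in N$, then $r^{m+n}x=0$). The paper instead takes precisely the route you mention as an alternative at the end: it invokes the characterisation $M\in\overline{\mathcal{T}}_\ia\Leftrightarrow\supp_R(M)\subseteq\var(\ia)$ from \ref{def30} B) together with Bourbaki's fact that $\supp_R(M)=\supp_R(N)\cup\supp_R(P)$ for a short exact sequence $0\to N\to M\to P\to 0$, which reduces the Serre-class axioms to a triviality about unions of sets being contained in $\var(\ia)$. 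Your direct proof buys self-containment and makes visible that the extension step is the same radical-of-annihilator calculation as in the proof that $\oga_\ia$ is a radical (\ref{cora140} a)); the paper's version is shorter but offloads the content to the support machinery. Both are sound.
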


\begin{proof}
This follows immediately from \ref{def30} B) and \cite[II.4.4 Proposition 16]{ac}.
\end{proof}

\begin{prop}\label{serre40}
Let $0\rightarrow N\overset{h}\rightarrow M\overset{l}\rightarrow P\rightarrow 0$ be an exact sequence of $R$-modules.

a) If $M\in\mathcal{T}_\ia$, then $N,P\in\mathcal{T}_\ia$.

b) If $N$ is noetherian and $N,P\in\mathcal{T}_\ia$, then $M\in\mathcal{T}_\ia$.
\end{prop}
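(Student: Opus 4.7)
Part a) should follow without using the hypothesis on $N$ and without any work at all, since $\ga_\ia$ is a subfunctor of the identity. For the submodule $N$: every $x \in N$ has $h(x) \in M = \ga_\ia(M)$, so some $\ia^n$ annihilates $h(x)$, and injectivity of $h$ transports this identity back to $x$. For the quotient $P$: any $y = l(x) \in P$ inherits the annihilator $\ia^n$ of $x \in M = \ga_\ia(M)$ by applying $l$. Both arguments are one-liners with no noetherian hypothesis involved.

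Part b) is where the real work lies. Given $x \in M$, my plan is to proceed in four steps. First, I would use $P \in \mathcal{T}_\ia$ to find $n \in \N$ with $\ia^n l(x) = 0$, so that $\ia^n x$ lies inside $\ke(l) = h(N)$. Second, I would invoke noetherianness of $N$ (hence of $h(N)$) to conclude that the submodule $\ia^n x \subseteq h(N)$ is of finite type, generated by some $a_1 x, \ldots, a_k x$ with $a_i \in \ia^n$. Third, I would lift each $a_i x$ to a $y_i \in N$ via $h$, and use $N \in \mathcal{T}_\ia$ together with the finiteness of $k$ to select a single $m \in \N$ with $\ia^m y_i = 0$ for every $i$, by taking the maximum of the finitely many individual exponents. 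Finally, I would observe that for any $b \in \ia^m$ the identity $b(a_i x) = h(b y_i) = 0$ holds at every generator, which upon expanding $\ia^{m+n} = \ia^m \cdot \ia^n$ and using $\ia^n x = \sum_i R a_i x$ forces $\ia^{m+n} x = 0$, so that $x \in \ga_\ia(M)$.

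The main obstacle, and the sole place the noetherian hypothesis intervenes, is the third step: extracting a uniform exponent $m$ annihilating all the $y_i$ simultaneously. Without finite generation of $\ia^n x$ there would be no reason for the individual annihilating exponents of the $y_i$ to admit a common upper bound, and stability of $\mathcal{T}_\ia$ under extensions could fail — and presumably does in general, which is consistent with the paper only claiming this direction under a finiteness assumption. Once $m$ is fixed, the final step is a routine computation in the exact sequence.
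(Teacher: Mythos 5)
Your proof is correct and follows essentially the same route as the paper's: part a) by direct transport of annihilators along $h$ and $l$, and part b) by using $P\in\mathcal{T}_\ia$ to push $\ia^nx$ into $h(N)$, then noetherianness of $N$ to get finite generation and hence a uniform exponent $m$. The paper phrases the finiteness step in terms of a finitely generated submodule $U\subseteq N$ with $h(U)=\ia^nx$ rather than explicit generators $a_ix$, but this is the same argument; your diagnosis of where the noetherian hypothesis is indispensable is also accurate.
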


\begin{proof}
a) Suppose that $M\in\mathcal{T}_\ia$. If $x\in N$, then $h(x)\in\ga_\ia(M)$, hence there exists $n\in\N$ with $h(\ia^nx)=\ia^nh(x)=0$, thus with $\ia^nx=0$, and therefore $x\in\ga_\ia(N)$. If $x\in P$, then there exists $y\in\ga_\ia(M)$ with $l(y)=x$, hence there exists $n\in\N$ with $\ia^ny=0$, implying $\ia^nx=\ia^nl(y)=l(\ia^ny)=0$, and thus $x\in\ga_\ia(P)$.

b) Suppose that $N$ is noetherian and $N,P\in\mathcal{T}_\ia$. Let $x\in M$. Then, there exists $n\in\N$ with $l(\ia^nx)=\ia^nl(x)=0$, implying $\ia^nx\subseteq h(N)$. So, there exists a sub-$R$-module $U\subseteq N$ with $h(U)=\ia^nx$. Since $U$ is of finite type, there exists $m\in\N$ with $\ia^mU=0$, implying $\ia^{m+n}x=\ia^mh(U)=h(\ia^mU)=h(0)=0$, and thus $x\in\ga_\ia(M)$.
\end{proof}

\begin{prop}\label{serre60}
If $\ia$ is half-centred or idempotent, then $\mathcal{T}_\ia$ is a Serre class.
\end{prop}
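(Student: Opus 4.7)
The plan is to handle the two hypotheses (half-centred / idempotent) separately, since each lets us bypass the noetherian assumption that was needed in \ref{serre40} b).

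First, if $\ia$ is half-centred, \ref{cora200} gives $\ga_\ia=\oga_\ia$, so $\mathcal{T}_\ia=\overline{\mathcal{T}}_\ia$, and this is already a Serre class by \ref{serre30}. This case costs nothing new.

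The substantive case is $\ia$ idempotent. My key observation is that idempotency trivialises the small torsion functor: since $\ia^n=\ia$ for every $n\in\N^*$, the condition $\ia^nx=0$ collapses to $\ia x=0$ (or $x=0$ when $n=0$). Hence $\ga_\ia(M)=(0:_M\ia)$ for every $R$-module $M$, and $\mathcal{T}_\ia$ is exactly the class of $R$-modules annihilated by $\ia$. In this reformulation the Serre property is almost immediate: closure under subobjects and quotients is part of \ref{serre40} a) (or can be read off directly), so only extension closure needs proof. Given an exact sequence $0\rightarrow N\overset{h}\rightarrow M\overset{l}\rightarrow P\rightarrow 0$ with $\ia N=0$ and $\ia P=0$, for $x\in M$ one has $l(\ia x)=\ia l(x)=0$, so $\ia x\subseteq h(N)$; applying $\ia$ again gives $\ia^2x\subseteq\ia\, h(N)=h(\ia N)=0$, and idempotency $\ia^2=\ia$ turns this into $\ia x=0$. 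Hence $M\in\mathcal{T}_\ia$.

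There is no real obstacle: the work done in \ref{cora200}, \ref{serre30} and \ref{serre40} a) does all the heavy lifting. The only point worth flagging is that the proof of \ref{serre40} b) genuinely used noetherianness to pass from a pointwise bound on the powers of $\ia$ killing elements to a uniform bound; the half-centred hypothesis sidesteps this by reducing to $\oga_\ia$, whereas idempotency sidesteps it by making every such bound automatically $1$.
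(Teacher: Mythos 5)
Your proof is correct and follows essentially the same route as the paper: the half-centred case is reduced to \ref{serre30} via \ref{cora200}, and the idempotent case is handled by the same extension argument ($\ia x\subseteq h(N)$, then $\ia x=\ia^2x\subseteq h(\ia N)=0$), with closure under subobjects and quotients delegated to \ref{serre40}~a). The remark that idempotency reduces $\ga_\ia(M)$ to $(0:_M\ia)$ is a nice conceptual framing but does not change the substance of the argument.
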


\begin{proof}
If $\ia$ is half-centred, then this follows from \ref{cora200} and \ref{serre30}. Let $\ia$ be idempotent, and let $0\rightarrow N\overset{h}\rightarrow M\overset{l}\rightarrow P\rightarrow 0$ be an exact sequence of $R$-modules with $N,P\in\mathcal{T}_\ia$. If $x\in M$, then $\ia l(x)=0$, hence $\ia x\subseteq h(N)$, thus $\ia x=\ia^2x\subseteq\ia h(N)=h(\ia N)=h(0)=0$, and therefore $x\in\ga_\ia(M)$. The claim follows now from \ref{serre40} a).
\end{proof}

\begin{no}\label{serre70}
In general, $\mathcal{T}_\ia$ need not be a Serre class, not even if $\ia$ is quasinilpotent and T-nilpotent. Indeed, if $0\neq\ia=\ga_\ia(R)\neq R$ (e.g. as in \ref{nil25} (cf. \ref{cora158} C))), then $\ga_\ia(\ia)=\ia$ and $\ga_\ia(R/\ia)=R/\ia$, but $\ga_\ia(R)\neq R$.
\end{no}

\begin{qu}\label{serre80}
The preceding results give rise to the following question:
\begin{aufz}
\item[$(*)$] \textit{For which ideals $\ia$ is $\mathcal{T}_\ia$ a Serre class?}
\end{aufz}
\end{qu}


\section{Bounded torsion and zerodivisors}

\begin{no}\label{bounded10}
A) Let $M$ be an $R$-module. We say that $M$ is \textit{of strongly bounded large $\ia$-torsion} if there exists $n\in\N$ with $\ia^nM\cap\oga_\ia(M)=0$, and \textit{of strongly bounded small $\ia$-torsion} if there exists $n\in\N$ with $\ia^nM\cap\ga_\ia(M)=0$. Furthermore, we say that $M$ is \textit{of bounded large $\ia$-torsion} if there exists $n\in\N$ with $\ia^n\oga_\ia(M)=0$, and \textit{of bounded small $\ia$-torsion} if there exists $n\in\N$ with $\ia^n\ga_\ia(M)=0$.\smallskip

B) It is readily checked that \[\xymatrix@C50pt@R15pt{\parbox{4.5cm}{\center$M$ is of strongly\\bounded large $\ia$-torsion}\ar@{=>}[r]\ar@{=>}[d]&\parbox{4.5cm}{\center$M$ is of strongly\\bounded small $\ia$-torsion}\ar@{=>}[d]\\\parbox{4.5cm}{\center$M$ is of bounded\\large $\ia$-torsion}\ar@{=>}[r]&\parbox{4.5cm}{\center$M$ is of bounded\\small $\ia$-torsion.}}\]

C) All four properties in A) are inherited by sub-$R$-modules.\smallskip

D) The $R$-module $M$ is of bounded large $\ia$-torsion if and only if $\oga_\ia(M)$ is of bounded large $\ia$-torsion if and only if $\oga_\ia(M)$ is of strongly bounded large $\ia$-torsion. Similarly, $M$ is of bounded small $\ia$-torsion if and only if $\ga_\ia(M)$ is of bounded small $\ia$-torsion if and only if $\ga_\ia(M)$ is of strongly bounded small $\ia$-torsion.
\end{no}

\begin{no}\label{bounded20}
A) If $n\in\N$, then the $R$-module $R/\ia^n$ is of strongly bounded large $\ia$-torsion.\smallskip

B) Noetherian $R$-modules are of strongly bounded large $\ia$-torsion by \ref{cora100} d) and the Artin--Rees Lemma.\smallskip

C) If $\ia$ is nilpotent, then every $R$-module is of strongly bounded large $\ia$-torsion; we will see in \ref{bounded45} b) that the converse need not hold.\smallskip

D) In general, an $R$-module need not be of bounded small $\ia$-torsion, not even if $\ia$ is quasinilpotent and T-nilpotent or if $R$ is noetherian. Indeed, if $R$ and $\ia$ are as in \ref{nil20} B) or \ref{nil25}, then $\ia$ is not nilpotent and $\ga_\ia(R)=\ia$ (\ref{cora158} B), C)), hence no power of $\ia$ annihilates $\ga_\ia(R)$. Similarly, if $R$, $\ia$ and $\ib$ are as in \ref{nil20} A) or \ref{cora158} D), then no power of $\ia$ is contained in $\ib$ and $\ga_\ia(R/\ib)=\ia/\ib$ (\ref{cora158} A), D)), hence no power of $\ia$ annihilates $\ga_\ia(R/\ib)$. For an example with a noetherian ring, let $R$ be a noetherian integral domain that is not a field, let $\ia\neq 0$, and let $M\dfgl\bigoplus_{n\in\N}R/\ia^n$. Then, $\ga_\ia(M)=M$, and for $n\in\N$ we have $\ia^n\neq\ia^{n+1}$ (\ref{idem20} B)), hence $\ia^n(R/\ia^{n+1})\neq 0$, and therefore $\ia^n\ga_\ia(M)\neq 0$.
\end{no}

\begin{no}\label{bounded30}
A) An $R$-module of strongly bounded small $\ia$-torsion need not be of bound\-ed large $\ia$-torsion. Indeed, if $\ia$ is idempotent, nil, nonzero and fulfils $(0:_R\ia)=0$, then $\ga_\ia(R)=0\neq R=\oga_\ia(R)$ (\ref{cora160} B)), hence $R$ is of strongly bounded small $\ia$-torsion, but not of bounded large $\ia$-torsion. The proof of \cite[3.9]{asstor} shows that the maximal ideal of the $0$-dimensional local ring $T$ constructed in \cite[2.2]{qr} fulfils these conditions. (A further such example will be given in \ref{bounded60}.)\smallskip

B) An $R$-module of bounded large $\ia$-torsion need not be of strongly bounded small $\ia$-torsion, not even if $R$ is noetherian. Indeed, let $R\dfgl K[X,Y]$ be the polynomial algebra in two indeterminates over a field $K$, and let $\ia\dfgl\langle X,Y\rangle_R$. Note that $\ga_\ia=\oga_\ia$ (\ref{cora100} a)) and $\ga_\ia(\ia)=0$. We consider the free $R$-module $F$ with basis $(e_i)_{i\in\N}$ and the $R$-module \[M\dfgl F/\langle\{Xe_0,Ye_0\}\cup\{Xe_{2i+1}+Ye_{2i+2}-e_0\mid i\in\N\}\rangle_R.\] Note that $\ia M=M$. For $i\in\N$ we denote by $a_i$ the canonical image of $e_i$ in $M$, and consider the sub-$R$-module $M_i\dfgl\langle a_j\mid j\in[0,2i]\rangle_R\subseteq M$ and the canonical inclusion $f_i\colon M_i\hookrightarrow M_{i+1}$. Then, $M=\bigcup_{i\in\N}M_i$. Note that $M_0=\langle a_0\rangle_R$, hence $\ia M_0=0$, and therefore $\ga_\ia(M_0)=M_0$ and $\ia\ga_\ia(M_0)=0$. It follows that $\ga_\ia(M)\neq 0$, and as $\ia M=M$ we see that $M$ is not of strongly bounded small $\ia$-torsion.

For $i\in\N$ there is an isomorphism of $R$-modules $\coker(f_i)\overset{\cong}\longrightarrow\ia$ with $a_j\mapsto 0$ for $j\in[0,2i]$, $a_{2i+1}\mapsto Y$ and $a_{2i+2}\mapsto -X$, implying $\ga_\ia(\coker(f_i))=0$ and therefore $\ga_\ia(M_i)=\ga_\ia(M_{i+1})$. Inductively, we get $\ga_\ia(M_i)=M_0$ for every $i\in\N$, and thus it follows that $\ga_\ia(M)=M_0$ (directly or by \ref{exact20}). Now, $\ia\ga_\ia(M)=\ia M_0=0$, and therefore $M$ is of bounded large $\ia$-torsion. (This example was pointed out by Will Sawin.)\smallskip

C) It follows from A) and B) that the only implications between the four properties in \ref{bounded10} B) are the ones given there and their obvious composition.
\end{no}

\begin{no}\label{bounded40}
A) If an $R$-module $M$ is of bounded large $\ia$-torsion, then $\ga_\ia(M)=\oga_\ia(M)$. The converse need not hold. Indeed, let $R$ and $\ia$ be such that there exists an $R$-module $N$ that is not of bounded small $\ia$-torsion (e.g. as in \ref{bounded20} D)), and let $M\dfgl\ga_\ia(N)$. Then, $\ga_\ia(M)=M$, hence $\ga_\ia(M)=\oga_\ia(M)$. For every $n\in\N$ we have $\ia^n\ga_\ia(N)\neq 0$, hence $\ia^n\oga_\ia(M)\neq 0$, and thus $M$ is not of bounded large $\ia$-torsion.\smallskip

B) An $R$-module of bounded large $\ia$-torsion and of strongly bounded small $\ia$-torsion is of strongly bounded large $\ia$-torsion, as follows from A).
\end{no}

\begin{prop}\label{bounded45}
a) If $\ia$ is idempotent, then every $R$-module is of bounded small $\ia$-torsion, but not necessarily of bounded large $\ia$-torsion or of strongly boun\-ded small $\ia$-torsion.

b) If $\ia$ is generated by idempotents, then every $R$-module is of strongly bounded large $\ia$-torsion.
\end{prop}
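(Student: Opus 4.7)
The plan is to treat the two halves of the proposition separately, both exploiting the identity $\ia^n = \ia$ for $n \geq 1$ when $\ia$ is idempotent, together with the coincidence $\ga_\ia = \oga_\ia$ from \ref{cora100} a) that holds whenever $\ia$ is generated by idempotents.

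For a), the positive assertion is immediate: if $x \in \ga_\ia(M)$ and $\ia^n x = 0$, idempotency forces $\ia x = 0$, hence $\ia \ga_\ia(M) = 0$. For the two failure statements I would work with the $0$-dimensional local ring $T$ from \cite{qr} 2.2, whose maximal ideal $\im$ is idempotent, nil, nonzero and satisfies $(0:_T\im) = 0$. The failure of bounded large $\ia$-torsion is already recorded in \ref{bounded30} A). For the failure of strongly bounded small $\ia$-torsion I would pick $0 \neq t \in \im$ and set $M \dfgl T/\im t$: the element $t + \im t$ lies in $\im M$ (since $t \in \im$) and in $\ga_\im(M)$ (since $\im t \subseteq \im t$), and it is nonzero because $t \in \im t$ would yield $(1-u)t = 0$ for some $u \in \im$, forcing $t = 0$ as $1-u$ is a unit of the local ring $T$. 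Combined with $\im^n M = \im M$ for every $n \geq 1$, this produces the required failure.

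For b), the technical heart is the Boolean observation that for any finite family $(e_j)_{j=1}^l$ of idempotents in $\ia$, the element $f \dfgl 1 - \prod_{j=1}^l(1 - e_j)$ is again idempotent, belongs to $\ia$, and satisfies $f e_j = e_j$ for each $j$. Given $x \in \ia M \cap \oga_\ia(M)$, I would write $x = \sum_i r_i m_i$ with $r_i \in \ia$, expand each $r_i$ as a finite $R$-linear combination of idempotents from $E$, and apply the above to the full finite collection of idempotents that appears, producing $f \in \ia$ with $f x = x$. Since $\ga_\ia = \oga_\ia$ and $\ia$ is idempotent, $x \in \oga_\ia(M)$ forces $\ia x = 0$, hence $f x = 0$; together with $f x = x$ this gives $x = 0$. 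Thus $\ia M \cap \oga_\ia(M) = 0$, and $\ia^n M = \ia M$ concludes the proof with $n = 1$.

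The main obstacle is spotting the counterexample for the second part of a); once the quotient $T/\im t$ is chosen the verification is mechanical, and the Boolean-algebra computation in b) is routine.
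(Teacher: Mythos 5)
Your proof is correct, and it deviates from the paper in two places worth noting. For the failure of strongly bounded small $\ia$-torsion in a), the paper introduces a fresh ring $R=K[Q]/\langle e_r\mid r>1\rangle_{K[Q]}$ built from the monoid algebra of the positive rationals and shows that $R$ itself fails the property via $\im\cap(0:_R\im)\neq 0$; you instead recycle the already-cited local ring $T$ from \cite[2.2]{qr} and form the quotient module $T/\im t$ for a nonzero $t\in\im$, checking that $t+\im t$ is a nonzero element of $\im M\cap\ga_\im(M)$. Your route has the advantage of not introducing a new ring, though it does require the small observation that $t\notin\im t$ (which you correctly justify via the unit $1-u$ in the local ring), while the paper's example is arguably more self-contained as it produces a single ring witnessing the failure with $M=R$. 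In b), the paper reduces to a single idempotent: it picks a finite subset $F\subseteq E$ with $x\in\langle F\rangle_RM\cap\oga_{\langle F\rangle_R}(M)$ using \ref{def40} a), then invokes \ref{idem20} B) (Nakayama) to replace $\langle F\rangle_R$ by a principal idempotent ideal $\langle e\rangle_R$, and finishes with the one-line computation $x=ey\Rightarrow x=ex=0$. You instead bypass \ref{idem20} B) entirely by explicitly constructing the dominating idempotent $f=1-\prod_j(1-e_j)$, verifying $f\in\ia$, $fe_j=e_j$, and hence $fx=x$, and then using $\oga_\ia=\ga_\ia$ with $\ia$ idempotent to get $\ia x=0$ and so $fx=0$. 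Both arguments are sound; yours is more hands-on and avoids the Nakayama-based structural lemma, while the paper's is shorter once \ref{idem20} B) and \ref{def40} a) are granted.
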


\begin{proof}
a) The first statement is clear from the definition, and the second one follows from \ref{bounded30} A). For the third one, let $K$ be a field, let $Q$ denote the additive monoid of positive rationals, let $K[Q]$ be the algebra of $Q$ over $K$, let $\{e_r\mid r\in Q\}$ denote its canonical basis, let $\im\dfgl\langle e_r\mid r>0\rangle_{K[Q]}$, let $R\dfgl K[Q]/\langle e_r\mid r>1\rangle_{K[Q]}$, and let $\overline{\cdot}\colon K[Q]\rightarrow R$ denote the canonical projection. Then, $\overline{\im}$ is an idempotent ideal of $R$ and $(0:_R\overline{\im})=\langle\overline{e_1}\rangle_R$, implying $\im\cap(0:_R\im)=\langle\overline{e_1}\rangle_R\neq 0$, and thus the $R$-module $R$ is not of strongly bounded small $\im$-torsion. (This example is due to Pham Hung Quy.)

b) Let $E\subseteq\idem(R)$ with $\ia=\langle E\rangle_R$. Let $M$ be an $R$-module. For $x\in\linebreak\ia M\cap\oga_\ia(M)$ there exists a finite subset $F\subseteq E$ with $x\in\langle F\rangle_RM\cap\oga_\ia(M)\subseteq\langle F\rangle_RM\cap\oga_{\langle F\rangle_R}(M)$ (\ref{def40} a)). So, keeping in mind \ref{idem20} B), we can suppose that $\ia=\langle e\rangle_R$ with $e\in\idem(R)$. Now, if $x\in\ia M\cap(0:_M\ia)$, then there exists $y\in M$ with $x=ey$, implying $0=ex=eey=ey=x$. This shows $\ia M\cap\oga_\ia(M)=\ia M\cap(0:_M\ia)=0$ (\ref{cora100} a)), and thus the claim is proven.
\end{proof}

\begin{no}\label{bounded60}
Let $\overline{R}$, $\overline{\ia}$ and $M$ be as in \ref{cora159} B) (cf. \ref{idem20} C)). Then, $\overline{\ia}$ is idempotent, and thus $M$ is of bounded small $\overline{\ia}$-torsion (\ref{bounded45} a)). Moreover, $0=\ga_{\overline{\ia}}(M)\neq\oga_{\overline{\ia}}(M)=M$ (\ref{cora159} B)) and $\overline{\ia}\not\subseteq\langle Y_0\rangle_{\overline{R}}$, so that $M$ is of strongly bounded small $\overline{\ia}$-torsion but not of bounded large $\overline{\ia}$-torsion.
\end{no}

\begin{no}\label{bounded49}
If $\ia$ is nil, then an $R$-module is of strongly bounded large $\ia$-torsion if and only if it is of bounded large $\ia$-torsion (\ref{def80} c)). The analogous statement for small $\ia$-torsion does not hold, as can be seen by noting that the ideal $\overline{\im}$ in the proof of \ref{bounded45} a) is nil.
\end{no}

\begin{no}\label{bounded70}
A) Let $M$ be an $R$-module. Then, we clearly have \[\xymatrix{\ga_\ia(M)\neq 0\;\ar@{=>}[r]&\;\oga_\ia(M)\neq 0\;\ar@{=>}[r]&\;\ia\subseteq\zd_R(M).}\]

B) If $\ass_R(M)$ is finite and $\zd_R(M)=\bigcup\ass_R(M)$, hence in particular if $M$ is noetherian, then the implications in A) are equivalences (\cite[2.1.1]{bs}).\smallskip

C) None of the converses in A) holds. Examples with $\ga_\ia(M)=0$ and $\oga_\ia(M)\neq 0$ were given in \ref{bounded30} A) and in \ref{bounded60}. For a counterexample to the converse of the second implication (even with a noetherian ring) we consider the factorial ring $R\dfgl\Z[X]$. Let $P$ denote the set of prime elements of $R$, let $M\dfgl\bigoplus_{p\in P}R/\langle p\rangle_R$, let $p,q\in P$ with $p\neq q$ and $\ia\dfgl\langle p,q\rangle_R\neq R$. Then, every non-unit of $R$ is a zerodivisor on $M$, hence $\ia\subseteq\zd_R(M)$, and $\oga_\ia(M)=\ga_\ia(M)=0$ (\ref{cora100} a)).
\end{no}


\section{Flat base change}

\noindent\textit{Throughout this section, let $h\colon R\rightarrow S$ be a flat morphism of rings.}

\begin{no}\label{flat10}
A) Composing the exact functor $S\otimes_R\bullet$ from $\catmod(R)$ to $\catmod(S)$ with \[\ga_\ia\hookrightarrow\oga_\ia\hookrightarrow\Id_{\catmod(R)}\] yields monomorphisms \[S\otimes_R\ga_\ia(\bullet)\rightarrowtail S\otimes_R\oga_\ia(\bullet)\rightarrowtail S\otimes_R\bullet\] of functors from $\catmod(R)$ to $\catmod(S)$.\smallskip

B) Let $M$ be an $R$-module. Let $m\in\oga_\ia(M)$ and $s\in S$. If $r\in\ia$, then $1\otimes r\in \ia S$, and there exists $n\in\N$ with $r^nm=0$; we get $(1\otimes r)^ns\otimes x=s\otimes(r^nm)=0$. It follows that $s\otimes m\in\oga_{\ia S}(S\otimes_RM)$. Therefore, the monomorphism $S\otimes_R\oga_\ia(M)\rightarrowtail S\otimes_RM$ induces by coastriction a monomorphism of $S$-modules \[\overline{\rho}_\ia^h(M)\colon S\otimes_R\oga_\ia(M)\rightarrowtail\oga_{\ia S}(S\otimes_RM).\]

C) Let $M$ be an $R$-module. Let $m\in\ga_\ia(M)$ and $s\in S$. There exists $n\in\N$ with $\ia^nm=0$, hence $(\ia S)^n(s\otimes m)=s\otimes(\ia^nm)=0$. It follows that $s\otimes m\in\ga_{\ia S}(S\otimes_RM)$. Therefore, the monomorphism $S\otimes_R\ga_\ia(M)\rightarrowtail S\otimes_RM$ induces by coastriction a monomorphism of $S$-modules \[\rho_\ia^h(M)\colon S\otimes_R\ga_\ia(M)\rightarrowtail\ga_{\ia S}(S\otimes_RM).\]

D) The monomorphisms $\overline{\rho}_\ia^h(M)$ and $\rho_\ia^h(M)$ in B) and C) are natural in $M$. Thus, we obtain a commutative diagram \[\xymatrix@R18pt{
S\otimes_R\ga_\ia(\bullet)\ar@{ >->}[r]\ar@{ >->}[d]_{\rho_\ia^h}&S\otimes_R\oga_\ia(\bullet)\ar@{ >->}[d]^{\overline{\rho}_\ia^h}\\
\ga_{\ia S}(S\otimes_R\bullet)\ar@{ (->}[r]&\oga_{\ia S}(S\otimes_R\bullet)
}\] of functors from $\catmod(R)$ to $\catmod(S)$.
\end{no}

\begin{no}\label{flat15}
If $S\subseteq R$ is a subset and $\eta\colon R\rightarrow S^{-1}R$ denotes the canonical epimorphism of rings, then, as a special case of \ref{flat10}, we get canonical monomorphisms of functors \[\rho^\eta_\ia\colon S^{-1}\ga_\ia(\bullet)\rightarrowtail\ga_{S^{-1}\ia}(S^{-1}\bullet)\quad\text{ and }\quad\overline{\rho}^\eta_\ia\colon S^{-1}\oga_\ia(\bullet)\rightarrowtail\oga_{S^{-1}\ia}(S^{-1}\bullet).\]
\end{no}

\begin{no}\label{flat20}
A) It follows from \ref{def80}, \ref{cora100} a) and \ref{flat10} that $\overline{\rho}_\ia^h$ is an isomorphism if $\ia$ is nil, and that $\rho_\ia^h=\overline{\rho}_\ia^h$ is an isomorphism if $\ia$ is nilpotent.\smallskip

B) In general, neither $\rho_\ia^h$ nor $\overline{\rho}_\ia^h$ need be an isomorphism, not even if $\ia$ is generated by idempotents and $h$ is the canonical epimorphism of a ring of fractions obtained by inverting a single element. Indeed, let $K$ be a field, let $R\dfgl K[(X_i)_{i\in\N}]/\langle X_i^2-X_i\mid i\in\N^*\rangle$, let $Y_i$ denote the canonical image of $X_i$ in $R$ for $i\in\N$, let $\ia\dfgl\langle Y_i\mid i\in\N^*\rangle_R$, let $\ib\dfgl\langle Y_0^iY_i\mid i\in\N^*\rangle_R$, and let $M\dfgl R/\ib$. We consider the subset $S\dfgl\{Y_0^i\mid i\in\N\}\subseteq R$ and the canonical epimorphism of rings $\eta\colon R\rightarrow S^{-1}R$. Then, $\ia\subseteq R$ and $S^{-1}\ia\subseteq S^{-1}R$ are generated by idempotents, hence $\rho_\ia^\eta=\overline{\rho}_\ia^\eta$ (\ref{cora100} a)). Moreover, $\ga_\ia(M)=0$, hence $S^{-1}\ga_\ia(M)=0$. The canonical epimorphism $M\twoheadrightarrow R/\ia$ induces an isomorphism $S^{-1}M\overset{\cong}\longrightarrow S^{-1}R/S^{-1}\ia$. Therefore, $\hm{S^{-1}R}{S^{-1}R/S^{-1}\ia}{S^{-1}M}\neq 0$, hence $\ga_{S^{-1}\ia}(S^{-1}M)\neq 0$ (\ref{def30} C)). It follows that $\rho_\ia^\eta(M)=\overline{\rho}_\ia^\eta(M)$ is not an isomorphism.
\end{no}

\begin{no}\label{flat30}
A) There is a canonical monomorphism \[\sigma^h_{\ia}\colon S\otimes_R\hm{R}{R/\ia}{\bullet}\rightarrowtail\hm{S}{S/\ia S}{S\otimes_R\bullet}\] of functors from $\catmod(R)$ to $\catmod(S)$; it is an isomorphism if $\ia$ is of finite type (\cite[I.2.10 Proposition 11]{ac}).\smallskip

B) Applying A) to $\ia^n$ for every $n\in\N$, taking inductive limits of the morphisms $\sigma^h_{\ia^n}$ thus obtained for $n\in\N$, and keeping in mind \ref{def30} C) and \cite[II.6.3 Proposition 7 Corollaire 3]{a}, we get a monomorphism \[S\otimes_R\ga_\ia(\bullet)\rightarrowtail\ga_{\ia S}(S\otimes_R\bullet)\] of functors from $\catmod(R)$ to $\catmod(S)$, equal to the monomorphism $\rho_\ia^h$ in \ref{flat10}. Clearly, if $\sigma^h_{\ia^n}(M)$ is an isomorphism for infinitely many $n$ and some $R$-module $M$, then $\rho^h_\ia(M)$ is an isomorphism, and if $\sigma^h_{\ia^n}$ is an isomorphism for infinitely many $n$, then $\rho^h_\ia$ is an isomorphism.
\end{no}

\begin{prop}\label{flat40}
Let $\kappa$ be a cardinal such that infinitely many powers of $\ia$ have generating sets of cardinality at most $\kappa$. Let $M$ be an $R$-module such that $\eps_{S,M,\kappa}\colon S\otimes_R(M^\kappa)\rightarrow(S\otimes_RM)^\kappa$ is a monomorphism. Then, the canonical monomorphism of $S$-modules \[\rho_\ia^h(M)\colon S\otimes_R\ga_\ia(M)\rightarrowtail\ga_{\ia S}(S\otimes_RM)\] is an isomorphism.
\end{prop}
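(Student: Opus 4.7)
The plan is to apply the observation at the end of \ref{flat30} B): it suffices to prove that $\sigma^h_{\ia^n}(M)$ is an isomorphism for every $n\in\N$ for which $\ia^n$ has a generating set of cardinality at most $\kappa$, since by hypothesis this is so for infinitely many $n$, and will therefore force $\rho^h_\ia(M)$ to be an isomorphism.

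Fix such an $n$ and choose a family $(a_i)_{i\in\kappa}$ in $R$ generating $\ia^n$, padding with zeros if necessary so that the family is indeed indexed by $\kappa$. This yields a right-exact sequence $R^{(\kappa)}\to R\to R/\ia^n\to 0$, and, after applying $h$, a right-exact sequence $S^{(\kappa)}\to S\to S/\ia^n S\to 0$. Applying $\hm{R}{\bullet}{M}$ to the first and $\hm{S}{\bullet}{S\otimes_RM}$ to the second identifies $\hm{R}{R/\ia^n}{M}$ with the kernel of the map $M\to M^\kappa$, $x\mapsto(a_ix)_{i\in\kappa}$, and $\hm{S}{S/\ia^n S}{S\otimes_RM}$ with the kernel of the map $S\otimes_RM\to(S\otimes_RM)^\kappa$, $z\mapsto(h(a_i)z)_{i\in\kappa}$. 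Tensoring the first left-exact sequence with $S$ and invoking flatness of $h$, one realises $S\otimes_R\hm{R}{R/\ia^n}{M}$ as the kernel of the map $S\otimes_RM\to S\otimes_RM^\kappa$, $s\otimes x\mapsto s\otimes(a_ix)_{i\in\kappa}$.

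The key step is then to fit these two left-exact sequences into a commutative diagram whose middle column is the identity on $S\otimes_RM$ and whose rightmost column is $\eps_{S,M,\kappa}$. Commutativity of the right square is immediate from the identity $\eps_{S,M,\kappa}(s\otimes(a_ix)_{i})=(s\otimes a_ix)_{i}=(h(a_i)s\otimes x)_{i}$. The induced morphism on kernels coincides with $\sigma^h_{\ia^n}(M)$, so by the hypothesis that $\eps_{S,M,\kappa}$ is a monomorphism the two kernels agree inside $S\otimes_RM$; since $\sigma^h_{\ia^n}(M)$ is already a monomorphism by \ref{flat30} A), it is therefore an isomorphism, as required. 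The main obstacle will be the bookkeeping that confirms the map induced on kernels in this ad hoc diagram really is the canonical comparison morphism $\sigma^h_{\ia^n}(M)$ constructed intrinsically in \ref{flat30} A), so that no feature of the chosen generating family other than its cardinality enters the argument.
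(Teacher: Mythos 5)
Your argument is correct and mirrors the paper's proof: you build the same two presentation sequences $R^{\oplus\kappa}\to R\to R/\ia^n\to 0$ and $S^{\oplus\kappa}\to S\to S/\ia^n S\to 0$, apply $S\otimes_R\hm{R}{\bullet}{M}$ and $\hm{S}{\bullet}{S\otimes_RM}$, identify the right-hand vertical with $\eps_{S,M,\kappa}$, and conclude that $\sigma^h_{\ia^n}(M)$ is an isomorphism before invoking \ref{flat30}~B). The only cosmetic difference is that the paper invokes the Five Lemma where you argue directly on kernels, and the naturality concern you flag in your last sentence is handled implicitly in the paper as well.
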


\begin{proof}
Let $\ib$ be a power of $\ia$ with a generating set of cardinality at most $\kappa$. Then, there is an exact sequence of $R$-modules \[R^{\oplus\kappa}\rightarrow R\rightarrow R/\ib\rightarrow 0,\] hence an exact sequence of $S$-modules \[S^{\oplus\kappa}\rightarrow S\rightarrow S/\ib S\rightarrow 0.\] Applying $S\otimes_R\hm{R}{\bullet}{M}$ to the first and $\hm{S}{\bullet}{S\otimes_RM}$ to the second sequence yields a commutative diagram of $S$-modules with exact rows \[\xymatrix{0\ar[r]&S\otimes_R\hm{R}{R/\ib}{M}\ar[r]\ar[d]^{\sigma_\ib^h(M)}&S\otimes_RM\ar[r]\ar@{=}[d]&S\otimes_R\hm{R}{R^{\oplus\kappa}}{M}\ar[d]^{\tau}\\0\ar[r]&\hm{S}{S/\ib S}{S\otimes_RM}\ar[r]&S\otimes_RM\ar[r]&\hm{S}{S^{\oplus\kappa}}{S\otimes_RM}.}\] There are canonical isomorphisms \[S\otimes_R\hm{R}{R^{\oplus\kappa}}{M}\cong S\otimes(M^\kappa)\] and \[\hm{S}{S^{\oplus\kappa}}{S\otimes_RM}\cong(S\otimes_RM)^\kappa,\] so that, up to isomorphism, $\tau$ is equal to the monomorphism $\eps_{S,M,\kappa}$. So, $\sigma^h_\ib(M)$ is an isomorphism by the Five Lemma, and \ref{flat30} B) yields the claim.
\end{proof}

\begin{cor}\label{flat42}
a) If $\ia$ has a power of finite type, then $\rho^h_\ia=\overline{\rho}^h_\ia$ is an isomorphism.

b) If $M$ is a noetherian $R$-module, then $\rho^h_\ia(M)=\overline{\rho}^h_\ia(M)$ is an isomorphism.

c) If the $R$-module $S$ is Mittag-Leffler, then $\rho^h_\ia$ is an isomorphism.
\end{cor}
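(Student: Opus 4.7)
The plan is to handle the three parts separately, each reducing to a tool already assembled in the preceding material: part a) to the comparison between $\rho_\ia^h$ and the morphisms $\sigma_{\ia^n}^h$ from \ref{flat30}, part b) to part a) via base change to a noetherian quotient, and part c) to \ref{flat40}.

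For a), I would argue that if $\ia^m$ is of finite type, then $\ia^{jm}=(\ia^m)^j$ is of finite type for every $j\in\N^*$, since products of ideals of finite type remain of finite type. Thus $\sigma_{\ia^{jm}}^h$ is an isomorphism by \ref{flat30} A) for every $j\in\N^*$, giving infinitely many $n$ for which $\sigma_{\ia^n}^h$ is an isomorphism, so that \ref{flat30} B) delivers that $\rho_\ia^h$ is an isomorphism. The equality $\rho_\ia^h=\overline{\rho}_\ia^h$ follows immediately from $\ga_\ia=\oga_\ia$ (\ref{cora100} a)).

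For b), the idea is to reduce to part a) by replacing $R$ with the noetherian quotient $\overline{R}\dfgl R/(0:_RM)$. Since $M$ is naturally a module over $\overline{R}$ and $\overline{\ia}\dfgl(\ia+(0:_RM))/(0:_RM)$ is of finite type in the noetherian ring $\overline{R}$, part a) applied to the (flat by base change) morphism $\overline{h}\colon\overline{R}\rightarrow\overline{S}\dfgl S\otimes_R\overline{R}$ shows that $\rho_{\overline{\ia}}^{\overline{h}}(M)$ is an isomorphism. Translating back via \ref{cora120} B) and the canonical identification $\overline{S}\otimes_{\overline{R}}M\cong S\otimes_RM$ yields the claim, and the equality with $\overline{\rho}_\ia^h(M)$ follows from \ref{cora100} d).

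For c), I would fix an infinite cardinal $\kappa\geq\card(\ia)$, so that every power $\ia^n$ has a generating set of cardinality at most $\kappa^n=\kappa$ (taking $n$-fold products of elements of $\ia$ as generators). By \ref{mittag40}, the hypothesis that the flat $R$-module $S$ is Mittag-Leffler ensures that $\eps_{S,M,\kappa}$ is a monomorphism for every $R$-module $M$, and \ref{flat40} then gives that $\rho_\ia^h(M)$ is an isomorphism for every $M$. The main obstacle I anticipate is the bookkeeping in b), where the different base rings, ideals, and scalar restrictions must be tracked consistently through \ref{cora120}; parts a) and c) are essentially immediate once the correct inputs to \ref{flat30} B) and \ref{flat40} have been identified.
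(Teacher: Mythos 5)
Your argument matches the paper's proof in all three parts: a) combines \ref{flat30} with the finite‑type power and \ref{cora100}~a), b) passes to the noetherian quotient $R/(0\!:_R\!M)$ via \ref{cora120}~B) and applies a), and c) combines \ref{mittag40} with \ref{flat40}. One small point worth tightening: in a) and b) the cited facts $\ga_\ia=\oga_\ia$ resp.\ $\ga_\ia(M)=\oga_\ia(M)$ identify only the \emph{domains} of $\rho^h_\ia$ and $\overline{\rho}^h_\ia$; for the equality of the two morphisms you also need their codomains $\ga_{\ia S}(S\otimes_R\bullet)$ and $\oga_{\ia S}(S\otimes_R\bullet)$ to agree, which in a) holds because $\ia S$ also has a power of finite type in $S$, and in b) is best obtained by noting that $S\otimes_RM$ is annihilated by $(0\!:_R\!M)$, so that a) applied over $R/(0\!:_R\!M)$ already gives the equality $\rho^{\overline h}_{\overline\ia}(M)=\overline{\rho}^{\overline h}_{\overline\ia}(M)$ directly (making the separate appeal to \ref{cora100}~d) superfluous).
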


\begin{proof}
a) follows from \ref{cora100} a) and \ref{flat30}. b) follows from \ref{cora120} B) and a) by considering $M$ as a module over the noetherian ring $R/(0:_RM)$ (\cite[VIII.1.3 Proposition 6 Corollaire]{a}). c) follows from \ref{mittag40} and \ref{flat40}.
\end{proof}

\begin{no}\label{flat50}
A) If the $R$-module $S$ is projective, hence in particular if it is free or of finite presentation, then it is Mittag-Leffler (\ref{mittag14} B)), and thus $\rho^h_\ia$ is an isomorphism by \ref{flat42} c).\smallskip

B) If $S=R[M]$ is the algebra of a commutative monoid $M$ over $R$, or if $R$ is perfect, then $\rho^h_\ia$ is an isomorphism by A). (Recall that a ring is \textit{perfect} if its flat modules are projective.)\smallskip

C) If $R$ is absolutely flat and the $R$-module $S$ is pseudocoherent, then $\rho^h_\ia=\overline{\rho}^h_\ia$ is an isomorphism by \ref{mittag50}, \ref{cora100} b) and \ref{flat42} c).\smallskip

D) Let $S\subseteq R$ be a subset, let $\eta\colon R\rightarrow S^{-1}R$ be the canonical epimorphism of rings, and let $M$ be an $R$-module with $\zd_R(M)=\{0\}$. Then, $\eps_{S^{-1}R,M,\kappa}$ is a monomorphism for every cardinal $\kappa$ (\cite[3.1]{webb}), and thus $\rho^\eta_\ia(M)$ is an isomorphism by \ref{flat40}.\smallskip

E) If $\rho^h_\ia$ is an isomorphism for every ideal $\ia$ (e.g., if $R$ is noetherian (\ref{flat42} a))), then the $R$-module $S$ need not be Mittag-Leffler. Indeed, the canonical bimorphism of rings $\Z\rightarrow\Q$ yields a counterexample (\cite[059U]{stacks}).
\end{no}

\begin{qu}\label{flat60}
The preceding results give rise to the following question:
\begin{aufz}
\item[$(*)$] \textit{For which $h$ and which $\ia$ is $\rho^h_\ia$ or $\overline{\rho}^h_\ia$ an isomorphism?}
\end{aufz}
\end{qu}


\section{Exactness properties}

\begin{no}\label{exact10}
A) Let $F$ be a preradical. By \cite[VI.1.7]{sten}, the following statements are equivalent: (i) $F$ is left exact; (ii) if $M$ is an $R$-module and $N\subseteq M$ is a sub-$R$-module, then $F(N)=F(M)\cap N$; (iii) $F$ is idempotent, and if $M$ is an $R$-module and $N\subseteq F(M)$ is a sub-$R$-module, then $N=F(N)$.\smallskip

B) Let $F$ be a left exact preradical. Then, $F$ commutes with unions and with intersections. Indeed, let $I$ be a set, let $M$ be an $R$-module, and let $(M_i)_{i\in I}$ be a family of sub-$R$-modules of $M$. For $y\in F(\bigcup_{i\in I}M_i)\subseteq\bigcup_{i\in I}M_i$ there exists $j\in I$ with $y\in M_j$, so A) implies $y\in M_j\cap F(\bigcup_{i\in I}M_i)=F(M_j)$ and hence $y\in\bigcup_{i\in I}F(M_i)$. Therefore, $F$ commutes with unions. Applying A) twice yields $\bigcap_{i\in I}F(M_i)=\bigcap_{i\in I}(F(M)\cap M_i)=F(M)\cap\bigcap_{i\in I}M_i=F(\bigcap_{i\in I}M_i)$. Therefore, $F$ commutes with intersections.
\end{no}

\begin{prop}\label{exact20}
The functors $\ga_\ia$ and $\oga_\ia$ are left exact and idempotent, and they commute with unions, intersections, and direct sums.
\end{prop}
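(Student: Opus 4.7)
The strategy is to verify the characterizations of left exactness and idempotency recorded in \ref{exact10} A), and then to appeal to \ref{exact10} B) for unions and intersections. The key observation is that for an $R$-module $M$ and an element $x\in M$, membership of $x$ in $\ga_\ia(M)$ or in $\oga_\ia(M)$ is determined purely by the annihilator $(0:_Rx)$, which is intrinsic to $x$ and independent of the ambient module.

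First I would exploit this observation to show that for every sub-$R$-module $N\subseteq M$ one has $\ga_\ia(N)=\ga_\ia(M)\cap N$ and $\oga_\ia(N)=\oga_\ia(M)\cap N$: the conditions $\ia^nx=0$ and $\ia\subseteq\sqrt{(0:_Rx)}$ do not care whether $x$ is viewed inside $N$ or inside $M$. By condition (ii) of \ref{exact10} A), both preradicals are left exact. Taking $N=\ga_\ia(M)$ in the first identity and $N=\oga_\ia(M)$ in the second yields $\ga_\ia(\ga_\ia(M))=\ga_\ia(M)$ and $\oga_\ia(\oga_\ia(M))=\oga_\ia(M)$, so both are idempotent. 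Commutation with arbitrary unions and intersections of sub-$R$-modules then follows immediately from \ref{exact10} B).

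For direct sums, I would argue by hand. Given a family $(M_i)_{i\in I}$ of $R$-modules, left exactness applied to the canonical injections $M_i\hookrightarrow\bigoplus_{j\in I}M_j$ yields monomorphisms $\bigoplus_{i\in I}\ga_\ia(M_i)\hookrightarrow\ga_\ia(\bigoplus_{i\in I}M_i)$ and $\bigoplus_{i\in I}\oga_\ia(M_i)\hookrightarrow\oga_\ia(\bigoplus_{i\in I}M_i)$. For the reverse inclusions, let $x=(x_i)_{i\in I}\in\bigoplus_{i\in I}M_i$; its support $J\dfgl\{i\in I\mid x_i\neq 0\}$ is finite, and $(0:_Rx)=\bigcap_{i\in J}(0:_Rx_i)$. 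If $x\in\ga_\ia(\bigoplus_{i\in I}M_i)$, then any $n\in\N$ with $\ia^n\subseteq(0:_Rx)$ witnesses simultaneously that $x_i\in\ga_\ia(M_i)$ for every $i\in J$; the analogous statement for $\oga_\ia$ is even simpler since no uniform exponent is required.

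Since every assertion reduces to an elementary manipulation of annihilators, there is no real obstacle. The only point that deserves a moment's attention is the direct sum case for $\ga_\ia$, where the existence of a common exponent $n$ is needed; this is supplied precisely by the finiteness of the support $J$.
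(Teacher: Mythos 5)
Your proof is correct and follows essentially the same route as the paper: verify condition (ii) of \ref{exact10}~A) to obtain left exactness and idempotency (the latter being built into condition (iii)), invoke \ref{exact10}~B) for unions and intersections, and check the direct-sum isomorphism directly via annihilators. One small slip in your closing remark: the finiteness of the support $J$ is needed to supply a common exponent only for the inclusion $\bigoplus_{i\in I}\ga_\ia(M_i)\subseteq\ga_\ia\bigl(\bigoplus_{i\in I}M_i\bigr)$ (which you in fact already obtained from left exactness), whereas the surjectivity direction you verify by hand already comes with its exponent $n$ and needs no such argument.
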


\begin{proof}
Both preradicals satisfy condition (ii) in \ref{exact10} A), hence are left exact and idempotent, and thus commute with unions and intersections by \ref{exact10} B). Concerning the claim about direct sums, it is readily checked directly that for any family $(M_i)_{i\in I}$ of $R$-modules, the canonical morphisms \[\bigoplus_{i\in I}\ga_\ia(M_i)\rightarrow\ga_\ia\bigl(\bigoplus_{i\in I}M_i\bigr)\quad\text{and}\quad\bigoplus_{i\in I}\oga_\ia(M_i)\rightarrow\oga_\ia\bigl(\bigoplus_{i\in I}M_i\bigr)\] are isomorphisms.
\end{proof}

\begin{no}\label{exact30}
By general nonsense and \ref{exact20}, the functors $\ga_\ia$ and $\oga_\ia$ commute with projective limits if and only if they commute with infinite products, and they commute with inductive limits if and only if they are right exact.\smallskip
\end{no}

\begin{prop}\label{exact40}
a) If $\ia$ is nilpotent, then $\ga_\ia=\oga_\ia$ commutes with projective limits.

b) If $\ia$ is nil, then $\oga_\ia$ commutes with projective limits, but $\ga_\ia$ need not do so.

c) If $\ia$ is idempotent, then $\ga_\ia$ commutes with projective limits, but $\oga_\ia$ need not do so.

d) If $\ia$ is generated by idempotents, then $\ga_\ia=\oga_\ia$ commutes with projective limits.

e) If $R$ is absolutely flat, then $\ga_\ia=\oga_\ia$ commutes with projective limits.

f) If $R$ is noetherian, then $\ga_\ia=\oga_\ia$ need not commute with projective limits.
\end{prop}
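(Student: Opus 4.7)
The overall plan is to invoke \ref{exact30} and reduce commutation with projective limits to commutation with infinite products. For either $F \in \{\ga_\ia,\oga_\ia\}$, the canonical map $F(\prod_i M_i) \to \prod_i F(M_i)$ is always an inclusion of submodules of $\prod_i M_i$; equality amounts to asking that a family $(x_i)$ with each $x_i \in F(M_i)$ admit a \emph{uniform} annihilator, that is, for $\ga_\ia$ a single $n \in \N$ with $\ia^n x_i = 0$ for all $i$, and for $\oga_\ia$, for each fixed $r \in \ia$, a single $n$ with $r^n x_i = 0$ for all $i$.

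For the positive statements, the idea in every case is to rewrite the relevant functor as a Hom-functor, which commutes with all products. Part (a) is immediate from \ref{def80} b) and the first half of (b) from \ref{def80} c), the functor being $\Id_{\catmod(R)}$. For the first half of (c), idempotency of $\ia$ gives $\ia^n = \ia$ for all $n \geq 1$, whence $\ga_\ia(M) = (0 :_M \ia) \cong \hm{R}{R/\ia}{M}$, which commutes with products. Part (d) follows because an ideal generated by idempotents is idempotent and satisfies $\ga_\ia = \oga_\ia$ by \ref{cora100} a), so the half of (c) just established applies to both functors. Part (e) is the same reduction: \ref{idem30} makes every ideal of an absolutely flat ring idempotent, and \ref{cora100} b) gives $\ga_\ia = \oga_\ia$.

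For the negative statements I would produce counterexamples of a common shape, each exhibiting a family of modules over which some $r \in \ia$ acts nilpotently but with unbounded nilpotency index. For the second half of (b), choose $R$ and $\ia$ as in \ref{nil20} A), so $\ia$ is nil but not nilpotent, and take $M_n \dfgl R/\ia^n$: each $M_n$ lies in $\mathcal{T}_\ia$ but $(1+\ia^n)_{n\in\N} \in \prod_n M_n$ is annihilated by no single power of $\ia$. The standard noetherian case (f) is obtained identically from $R = \Z$, $\ia = \langle p\rangle_\Z$ and $M_n = \Z/p^n\Z$. The delicate second half of (c) requires the idempotent, non-nil ideal $\overline{\ia}$ of \ref{cora159} B); with $M_n \dfgl \overline{R}/\langle Y_n\rangle_{\overline{R}}$ one verifies via \ref{def30} B) that $\oga_{\overline{\ia}}(M_n) = M_n$ (any prime of $\overline{R}$ containing $Y_n$ contains every $Y_j$, because $Y_i^2 = Y_{i+1}$), while the element $Y_0 \in \overline{\ia}$ prevents $(1)_{n\in\N}$ from lying in $\oga_{\overline{\ia}}(\prod_n M_n)$: the identity $Y_0^{2^n} = Y_n$ forces any hypothetical uniform $N$ with $Y_0^N \in \langle Y_n\rangle_{\overline{R}}$ to satisfy $N \geq 2^n$ for every $n$, which is absurd.

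The main obstacle is the construction for the negative half of (c), since the counterexample must use an idempotent ideal that is not nil (so as to avoid \ref{def80} c)) and not generated by idempotents (so as to avoid part (d)); the square-root tower of \ref{idem20} E) is tailor-made for this, providing an element $Y_0 \in \overline{\ia}$ with $2^n$-th roots for every $n$ and hence quotients $\overline{R}/\langle Y_n\rangle_{\overline{R}}$ exhibiting arbitrarily high $Y_0$-nilpotency indices. All remaining assertions rest on the Hom-reformulation for idempotent $\ia$ or on the basic non-uniform-annihilator trick.
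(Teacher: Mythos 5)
Your proposal is correct and follows the same strategy as the paper's proof: reduce to infinite products via \ref{exact30}, handle the positive cases by rewriting the torsion functor as $\hm{R}{R/\ia}{\bullet}$ (or trivially as $\Id$ in the nilpotent/nil cases), and defeat commutation in the negative cases by exhibiting a family of torsion elements with unbounded annihilator index.

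The one substantive difference is in the counterexample for the second half of (b). You take $R$, $\ia$ from \ref{nil20} A) (quasinilpotent but not T-nilpotent) and form $\prod_{n}R/\ia^n$, observing that $(1+\ia^n)_n$ is not annihilated by any $\ia^m$ since $\ia^m\neq\ia^{m+1}$ there. The paper instead uses the ideal of \ref{nil25}, which is simultaneously quasinilpotent and T-nilpotent, and works with $(Y_i)_{i\in\N}\in\ga_\ia(R)^\N$; that choice is deliberate, letting the paper record that failure persists even under both finiteness-like hypotheses. Your example suffices for the literal claim but carries less information. For the second half of (c), your argument is a valid (slightly slimmed) variant of the paper's: you index by $\N$ rather than $\Z$ and target the single element $Y_0\in\overline{\ia}$, whereas the paper indexes by $\Z$ and shows every $Y_j$ fails to become nilpotent on $x$; since $\oga_{\overline{\ia}}$-membership fails as soon as one generator of $\overline{\ia}$ escapes $\sqrt{(0:_Rx)}$, your reduction is harmless. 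The remaining parts match the paper essentially verbatim.
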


\begin{proof}
a) follows from \ref{def80}.

b) The first claim follows from \ref{def80} c). For the second one, let $R$ and $\ia$ be as in \ref{nil25}. As $\ga_\ia(R)=\ia$ (\ref{cora158} C)), we have $(Y_i)_{i\in\N}\in\ia^\N=\ga_\ia(R)^\N$. If $n\in\N$, then $\prod_{i=n+1}^{2n}Y_i\in\ia^n$ and $\prod_{i=n+1}^{2n}Y_i\cdot Y_n\neq 0$, hence $\ia^n(Y_i)_{i\in\N}\neq 0$. Therefore, the canonical morphism $\ga_\ia(R^\N)\rightarrow\ga_\ia(R)^\N$ is not surjective. (Note that $\ia$ is quasinilpotent and T-nilpotent.)

c) If $\ia$ is idempotent, then $\ga_\ia(\bullet)\cong\hm{R}{R/\ia}{\bullet}$ (\ref{def30} C)), and the latter functor commutes with projective limits. For the second claim, let $\overline{R}$ and $\overline{\ia}$ be as in \ref{cora159} B) (cf. \ref{idem20} C)), so that $\overline{\ia}$ is idempotent, and let $N_i\dfgl\overline{R}/\langle Y_i\rangle_{\overline{R}}$ for $i\in\Z$. Then, $\oga_{\overline{\ia}}(N_i)=N_i$ for every $i\in\Z$, hence $\prod_{i\in\Z}\oga_{\overline{\ia}}(N_i)=\prod_{i\in\Z}N_i$. We consider $x=(1+\langle Y_i\rangle_{\overline{R}})_{i\in\Z}\in\prod_{i\in\Z}N_i$. If $j\in\Z$ and $k\in\N$, then for $i\in\Z$ with $j+k<i$ we have $Y_j^{2^k}=Y_{j+k}\notin\langle Y_i\rangle_{\overline{R}}$, hence $Y_j^{2^k}x\neq 0$. It follows that $x\notin\oga_{\overline{\ia}}(\prod_{i\in\Z}N_i)$ and thus the claim.

d) follows from c) and \ref{cora100} a).

e) follows from \ref{idem30} and d).

f) Let $R\dfgl\Z$, let $\ia\dfgl 2\Z$, and let $M_m\dfgl\Z/2^m\Z$ for $m\in\N$. Then, $\ga_\ia(M_m)=M_m$ for $m\in\N$, hence $\prod_{m\in\N}\ga_\ia(M_m)=\prod_{m\in\N}M_m$. For every $n\in\N$ there exists $m\in\N$ with $2^n\notin 2^m\Z$, hence $(1+2^m\Z)_{m\in\N}\in\prod_{m\in\N}\ga_\ia(M_m)\setminus\ga_\ia(\prod_{m\in\N}M_m)$, and thus $\ga_\ia=\oga_\ia$ does not commute with products (\ref{cora100} b)).
\end{proof}

\begin{prop}\label{exact110}
a) If $\ia$ is idempotent, then the following statements are equivalent: (i) $\ga_\ia$ commutes with inductive limits; (ii) $\ga_\ia$ commutes with right filtering inductive limits; (iii) $\ia$ is of finite type.

b) If $\ia$ is generated by idempotents, then the following statements are equivalent: (i) $\oga_\ia$ commutes with inductive limits; (ii) $\oga_\ia$ commutes with right filtering inductive limits; (iii) $\ia$ is of finite type.

c) If $R$ is absolutely flat, then the following statements are equivalent: (i) $\ga_\ia=\oga_\ia$ commutes with inductive limits; (ii) $\ga_\ia=\oga_\ia$ commutes with right filtering inductive limits; (iii) $\ia$ is of finite type.

d) If $\ia$ is idempotent and $\oga_\ia$ commutes with inductive limits, then $\ia$ need not be of finite type.
\end{prop}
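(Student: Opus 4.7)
The plan is to deduce b), c), and d) from a), and to concentrate the work on the implication (ii) $\Rightarrow$ (iii) of a). For a), the implication (i) $\Rightarrow$ (ii) is immediate. For (iii) $\Rightarrow$ (i), if $\ia$ is idempotent and of finite type, then \ref{idem20} B) gives an idempotent $e \in R$ with $\ia = \langle e \rangle_R$; hence $R/\ia \cong (1-e)R$ is a direct summand of $R$, and the functor $M \mapsto (1-e)M \cong \hm{R}{R/\ia}{M}$ is exact and commutes with arbitrary colimits. Since $\ia^n = \ia$ for every $n \geq 1$, the inductive system in \ref{def30} C) is eventually constant, yielding $\ga_\ia \cong \hm{R}{R/\ia}{\bullet}$, which gives (i).

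The main step is (ii) $\Rightarrow$ (iii). I would work with the directed set of finite subsets $F \subseteq \ia$ ordered by inclusion, the associated inductive system of ideals $\ib_F \dfgl \langle F \rangle_R$, and the induced inductive system $(R/\ib_F)_F$ with the canonical transition surjections, whose inductive limit is $R/\ia$. Idempotency of $\ia$ gives $\ga_\ia(R/\ib_F) = \{r + \ib_F \mid \ia r \subseteq \ib_F\}$ and $\ga_\ia(R/\ia) = R/\ia$, so the assumption (ii) forces $1 + \ia \in R/\ia$ to be the image of some $r + \ib_{F_0} \in \ga_\ia(R/\ib_{F_0})$; equivalently, $\ia r \subseteq \ib_{F_0}$ and $1 - r \in \ia$. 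Expanding $1 - r$ as a finite $R$-linear combination of elements of $\ia$ yields a finite $F_1 \subseteq \ia$ with $1 - r \in \ib_{F_1}$. Setting $F \dfgl F_0 \cup F_1$, for $a \in \ia$ the decomposition $a = a r + a(1 - r)$ together with $ar \in \ib_{F_0} \subseteq \ib_F$ and $a(1-r) \in \ia \ib_F \subseteq \ib_F$ shows $a \in \ib_F$, hence $\ia = \ib_F = \langle F \rangle_R$ is of finite type.

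For b), ``generated by idempotents'' implies ``idempotent'' by \ref{idem20} A) and $\ga_\ia = \oga_\ia$ by \ref{cora100} a), so the claim reduces to a). For c), over an absolutely flat ring every ideal is generated by idempotents by \ref{idem30}, so b) applies, and again $\ga_\ia = \oga_\ia$ via \ref{cora100} b). For d), I would take the $0$-dimensional local ring $T$ whose maximal ideal $\im$ is idempotent, nil, and nonzero, as constructed in \cite[2.2]{qr}. Being nil, \ref{def80} c) gives $\oga_\im = \Id_{\catmod(T)}$, which trivially commutes with all inductive limits; yet $\im$ cannot be of finite type, since otherwise \ref{idem20} B) would make it a principal idempotent ideal $\langle e \rangle_T$, and in the local ring $T$ the only idempotents are $0$ and $1$, contradicting the properness and nonzeroness of $\im$.

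The principal obstacle is identifying the correct directed system for (ii) $\Rightarrow$ (iii) in a) and exploiting idempotency twice --- once so that $\ga_\ia(R/\ib_F)$ takes the clean form above, and once so that the term $a(1-r) \in \ia \ib_F$ can be absorbed into $\ib_F$. Once this system is in hand, the remaining manipulations are routine given the preliminary sections.
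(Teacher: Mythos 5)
Your proof is correct, and parts b), c), d) as well as the direction (iii) $\Rightarrow$ (i) of a) match the paper's argument essentially verbatim (same chain through \ref{idem20} B), \ref{cora100} a), \ref{idem30}, \ref{def80} c), and the example from \cite[2.2]{qr}). Where you genuinely diverge is in the key implication (ii) $\Rightarrow$ (iii) of a). The paper takes the high road: once one knows $\ga_\ia \cong \hm{R}{R/\ia}{\bullet}$ for idempotent $\ia$, it cites the general fact \cite[V.3.4]{sten} that a Hom-functor $\hm{R}{N}{\bullet}$ commutes with right filtering inductive limits if and only if $N$ is of finite presentation, and then reads off finite generation of $\ia$. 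You instead give a direct, self-contained argument: apply the hypothesis to the specific filtered system $(R/\ib_F)_F$ of quotients by finitely generated subideals $\ib_F \subseteq \ia$, extract a finite $F_0$ and an $r$ with $\ia r \subseteq \ib_{F_0}$ and $1 - r \in \ia$, absorb $1-r$ into a further finitely generated $\ib_{F_1}$, and conclude $\ia = \ib_{F_0 \cup F_1}$ via the decomposition $a = ar + a(1-r)$. Both are sound; yours trades the citation to Stenström for a concrete computation and makes visible exactly which filtered colimit does the work, which is arguably more instructive. One small inaccuracy in your closing commentary: you say idempotency is exploited twice, but the absorption $a(1-r) \in \ib_F$ needs only that $\ib_F$ is an ideal (it absorbs multiplication by $a \in R$), not idempotency of $\ia$. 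Idempotency enters exactly once, to collapse $\ia^n$ to $\ia$ in the description of $\ga_\ia(R/\ib_{F_0})$. This is a remark about your own annotation, not a gap in the proof.
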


\begin{proof}
a) As $\ia$ is idempotent, we have $\ga_\ia(\bullet)\cong\hm{R}{R/\ia}{\bullet}$. If $\hm{R}{R/\ia}{\bullet}$ commutes with right filtering inductive limits, then $R/\ia$ is of finite presentation (\cite[V.3.4]{sten}), thus $\ia$ is of finite type. If $\ia$ is of finite type, then it is generated by a single idempotent (\ref{idem20} B)), hence $R/\ia$ is projective (\cite[05KK]{stacks}), thus $\ga_\ia(\bullet)\cong\hm{R}{R/\ia}{\bullet}$ is exact and therefore commutes with inductive limits (\ref{exact30}). This yields the claim.

b) follows from \ref{cora100} a) and a).

c) follows from \ref{idem30} and b).

d) If $\ia$ is idempotent, nil and nonzero (e.g. as in \cite[2.2]{qr}), then $\ia$ is not of finite type, and the claim follows from \ref{def80} c).
\end{proof}

\begin{prop}\label{exact70}
a) If $\ia$ is nilpotent, then $\ga_\ia=\oga_\ia$ commutes with inductive limits.

b) If $\ia$ is nil, then $\oga_\ia$ commutes with inductive limits, but $\ga_\ia$ need not commute with right filtering inductive limits.

c) If $\ia$ has a power of finite type, then $\ga_\ia=\oga_\ia$ commutes with right filtering inductive limits.

d) If $R$ is noetherian, then $\ga_\ia=\oga_\ia$ commutes with right filtering inductive limits, but need not commute with inductive limits.
\end{prop}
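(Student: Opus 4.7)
The plan is as follows. Parts (a) and the first half of (b) reduce immediately to \ref{def80}: if $\ia$ is nilpotent then $\ga_\ia = \oga_\ia = \Id_{\catmod(R)}$, which commutes with every limit and colimit, and if $\ia$ is merely nil then $\oga_\ia = \Id_{\catmod(R)}$ still commutes with inductive limits trivially.

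For the counterexample in (b), I would reuse a familiar idempotent, nil, nonzero ideal, for instance the maximal ideal of the ring from \cite[2.2]{qr}, already invoked in \ref{cora160} B) and \ref{exact110} d). Such an ideal cannot be of finite type (by the idempotent--nilpotent argument in \ref{idem20} B)), so the equivalence (ii)$\Leftrightarrow$(iii) in \ref{exact110} a) supplies an $\ia$ for which $\ga_\ia$ fails to commute with right filtering inductive limits, while $\oga_\ia=\Id_{\catmod(R)}$ still does by nilness.

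For (c), the key inputs are $\ga_\ia = \oga_\ia$ from \ref{cora100} a) together with the formula $\ga_\ia(\bullet) \cong \ilim_{n\in\N}\hm{R}{R/\ia^n}{\bullet}$ from \ref{def30} C). If $\ia^{n_0}$ is of finite type, then so is $\ia^n$ for every $n\geq n_0$, whence $R/\ia^n$ is of finite presentation and $\hm{R}{R/\ia^n}{\bullet}$ commutes with right filtering inductive limits (\cite[V.3.4]{sten}). Since the tail $(\hm{R}{R/\ia^n}{\bullet})_{n\geq n_0}$ is cofinal in the defining system, $\ga_\ia$ is exhibited as a right filtering inductive limit of functors each of which commutes with right filtering inductive limits; as inductive limits commute with inductive limits, the claim follows.

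For (d), the positive assertion is immediate from (c), since every ideal of a noetherian ring is of finite type. For the negative one, I would take $R\dfgl\Z$ and $\ia\dfgl 2\Z$ and view the cokernel $\Z/2\Z$ of multiplication by $2$ on $\Z$ as a particular inductive limit. Since $\ga_{2\Z}(\Z)=0$ whereas $\ga_{2\Z}(\Z/2\Z)=\Z/2\Z\neq 0$, the functor $\ga_{2\Z}$ fails to be right exact and hence cannot commute with all inductive limits (\ref{exact30}). The only mildly subtle point in the whole plan is the cofinality and colimit--commutation step in (c), but both are standard and cause no real obstacle.
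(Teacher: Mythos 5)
Your overall architecture matches the paper's: parts (a), the positive half of (b), and (d) are handled identically, and your counterexample for (b) is the same idempotent--nil--nonzero ideal run through \ref{exact110} a). The one genuine flaw is in (c), where you assert that if $\ia^{n_0}$ is of finite type then $\ia^n$ is of finite type for every $n\geq n_0$. That is not true in general: one only gets finite generation of $\ia^{kn_0}$ for $k\in\N^*$ (a power of a finitely generated ideal is finitely generated), and the set $\{n\in\N\mid\ia^n\text{ is of finite type}\}$ can fail to contain a tail $[n_0,\infty)$; the cited work of Gilmer--Heinzer--Roitman \cite{ghr} is precisely about such phenomena, and one needs both $\ia^n$ and $\ia^{n+1}$ of finite type to conclude that $\ia^m$ is so for all $m\geq n$. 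The paper sidesteps this by stating only that $\hm{R}{R/\ia^n}{\bullet}$ commutes with $I$-colimits \emph{for infinitely many $n$}, which is all the colimit-interchange argument needs. Your proof is therefore repaired, with no change to its structure, by replacing ``for every $n\geq n_0$'' with ``for every $n$ a positive multiple of $n_0$''; the multiples of $n_0$ are still cofinal in $\N$, and the rest of the cofinality and $\ilim$--$\ilim$ interchange goes through unchanged, exactly as in the paper's proof.
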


\begin{proof}
a) follows from \ref{def80}.

b) The first claim follows from \ref{def80}. If $\ia$ is idempotent, nil and nonzero (e.g. as in \cite[2.2]{qr}), then $\ia$ is not of finite type, hence \ref{exact110} a) yields the second claim.

c) Let $I$ be a right filtering small category, and let $D\colon I\rightarrow\catmod(R)$ be a functor. By \cite[V.3.4]{sten} and our hypothesis, $\hm{R}{R/\ia^n}{\bullet}$ commutes with $I$-limits for infinitely many $n\in\N$. Keeping in mind that inductive limits commute with inductive limits, it thus follows \[\ga_\ia(\ilim_ID)\cong\ilim_{n\in\N}\hm{R}{R/\ia^n}{\ilim_ID}\cong\ilim_{n\in\N}\ilim_I\hm{R}{R/\ia^n}{D}\cong\]\[\ilim_I\ilim_{n\in\N}\hm{R}{R/\ia^n}{D}\cong\ilim_I(\ga_\ia\circ D).\] It is readily checked that the composition of these isomorphisms equals the canonical morphism $\ga_\ia(\ilim_ID)\rightarrow\ilim_I(\ga_\ia\circ D)$, and therefore the claim is proven. (A proof on foot of this statement is given in \cite[3.4.4]{bs}.)

d) The first claim follows from c). For the second one, we observe that applying $\ga_{2\Z}$ to the exact sequence of $\Z$-modules $0\rightarrow\Z\overset{2\cdot}\rightarrow\Z\rightarrow\Z/2\Z\rightarrow 0$ does not yield an exact sequence.
\end{proof}

\begin{prop}\label{exact107}
If\/ $\ga_\ia$ commutes with inductive limits, then it is a radical and $R=\ga_\ia(R)+\ia$.
\end{prop}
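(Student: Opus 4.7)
The plan is to reduce the claim to exactness of $\ga_\ia$ and then apply it to two well-chosen short exact sequences.

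First, by \ref{exact30} the hypothesis that $\ga_\ia$ commutes with inductive limits is equivalent to $\ga_\ia$ being right exact, and combined with \ref{exact20} (which gives left exactness of $\ga_\ia$) this makes $\ga_\ia$ an exact functor. This is the key structural consequence, and essentially everything else will fall out by functoriality.

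For the radical property, I would apply the exact functor $\ga_\ia$ to the canonical short exact sequence
\[0\rightarrow\ga_\ia(M)\rightarrow M\rightarrow M/\ga_\ia(M)\rightarrow 0\]
associated to an arbitrary $R$-module $M$. The resulting short exact sequence
\[0\rightarrow\ga_\ia(\ga_\ia(M))\rightarrow\ga_\ia(M)\rightarrow\ga_\ia(M/\ga_\ia(M))\rightarrow 0\]
has as its leftmost map the inclusion coming from idempotency of $\ga_\ia$ (again by \ref{exact20}), which is the identity on $\ga_\ia(M)$. Exactness then forces the cokernel $\ga_\ia(M/\ga_\ia(M))$ to vanish, which is precisely the radical property.

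For the identity $R=\ga_\ia(R)+\ia$, I would instead apply the exact functor $\ga_\ia$ to the short exact sequence
\[0\rightarrow\ia\rightarrow R\rightarrow R/\ia\rightarrow 0.\]
Since $\ia$ annihilates $R/\ia$ we have $\ga_\ia(R/\ia)=R/\ia$, so the resulting exact sequence reads
\[0\rightarrow\ga_\ia(\ia)\rightarrow\ga_\ia(R)\rightarrow R/\ia\rightarrow 0,\]
in which the rightmost morphism is the restriction of the canonical projection $R\twoheadrightarrow R/\ia$ to $\ga_\ia(R)$. Its surjectivity is exactly the assertion that $\ga_\ia(R)+\ia=R$.

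There is no real obstacle here: once the equivalence between commutation with inductive limits and right exactness (from \ref{exact30}) is invoked, both conclusions are immediate consequences of applying an exact subfunctor of $\Id_{\catmod(R)}$ to a tautological short exact sequence. The only care needed is to verify that the connecting maps in the image sequences are the expected canonical morphisms, which is automatic from naturality.
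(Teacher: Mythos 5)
Your proof is correct and follows essentially the same route as the paper: invoke \ref{exact30} to conclude that $\ga_\ia$ is exact, apply it to the two tautological short exact sequences $0\rightarrow\ga_\ia(M)\rightarrow M\rightarrow M/\ga_\ia(M)\rightarrow 0$ and $0\rightarrow\ia\rightarrow R\rightarrow R/\ia\rightarrow 0$, and read off the two claims. The only cosmetic difference is in the last step: you observe directly that the surjection $\ga_\ia(R)\twoheadrightarrow R/\ia$ is the restriction of $R\twoheadrightarrow R/\ia$ and hence its surjectivity says $\ga_\ia(R)+\ia=R$, whereas the paper first rewrites $\ga_\ia(\ia)=\ia\cap\ga_\ia(R)$ via \ref{exact10} A) and then invokes the second isomorphism theorem to reach the same identity. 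Both are fine; yours is marginally more direct.
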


\begin{proof}
Suppose that $\ga_\ia$ commutes with inductive limits. If $M$ is an $R$-module, then applying $\ga_\ia$ to the exact sequence of $R$-modules \[0\rightarrow\ga_\ia(M)\rightarrow M\rightarrow M/\ga_\ia(M)\rightarrow 0\] yields an exact sequence of $R$-modules \[0\rightarrow\ga_\ia(M)\overset{\Id}\longrightarrow\ga_\ia(M)\rightarrow\ga_\ia(M/\ga_\ia(M))\rightarrow 0,\] hence $\ga_\ia(M/\ga_\ia(M))=0$. Thus, $\ga_\ia$ is a radical. Furthermore, applying $\ga_\ia$ to the exact sequence of $R$-modules \[0\rightarrow\ia\hookrightarrow R\rightarrow R/\ia\rightarrow 0\] and keeping in mind \ref{exact10} A) and \ref{exact20} yields an exact sequence of $R$-modules \[0\rightarrow\ia\cap\ga_\ia(R)\rightarrow\ga_\ia(R)\rightarrow R/\ia\rightarrow 0,\] hence $(\ga_\ia(R)+\ia)/\ia\cong\ga_\ia(R)/(\ia\cap\ga_\ia(R))\cong R/\ia$, and therefore $R=\ga_\ia(R)+\ia$.
\end{proof}

\begin{prop}\label{exact108}
a) Suppose that $\ia$ is prime. Then, $\ga_\ia$ commutes with inductive limits if and only if $\ia=0$.

b) Suppose that the $R$-module $R$ is of bounded small $\ia$-torsion. Then, $\ga_\ia$ commutes with inductive limits if and only if $\ia$ has a power that is generated by a single idempotent.

c) Suppose that $R$ is local, or that $R$ is semilocal and $\ia$ has a power of finite type. Then, $\ga_\ia$ commutes with inductive limits if and only if $\ia=R$ or $\ia$ is nilpotent.
\end{prop}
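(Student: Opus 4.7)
The plan is to derive a common structural consequence of ``$\ga_\ia$ commutes with inductive limits'' and then specialise to each of (a), (b), (c). The common step runs as follows: by \ref{exact107}, $\ga_\ia$ is a radical and $R=\ga_\ia(R)+\ia$, so write $1=x+r$ with $x\in\ga_\ia(R)$ and $r\in\ia$. Choose $n\in\N$ with $\ia^n x=0$; since $x=1-r$, this rewrites as $\ia^n(1-r)=0$, hence $\ia^n=\ia^n r\subseteq\ia^{n+1}$ and so $\ia^n=\ia^{n+1}$, i.e.\ $\ia^n$ is idempotent. Using $\ga_\ia=\ga_{\ia^n}$ (\ref{def50}), the hypothesis on $\ga_\ia$ transfers to $\ga_{\ia^n}$, and \ref{exact110} a) then yields that $\ia^n$ is of finite type. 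By \ref{idem20} B), $\ia^n=\langle f\rangle_R$ for some $f\in\idem(R)$.

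The ``if'' directions are short. In (b), $\ia^n=\langle f\rangle_R$ is idempotent and of finite type, so $\ga_\ia=\ga_{\ia^n}$ commutes with inductive limits by \ref{exact110} a). In (a), $\ia=0$ is nilpotent, so $\ga_\ia=\Id_{\catmod(R)}$ by \ref{def80} b), which commutes with all limits. In (c), $\ia=R$ gives $\ga_\ia=0$ and $\ia$ nilpotent gives $\ga_\ia=\Id_{\catmod(R)}$, both trivially commuting with all limits.

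For the ``only if'' implications, (b) is already the common conclusion. For (c) in the local case, the only idempotents of $R$ are $0$ and $1$: $f=0$ forces $\ia^n=0$ (so $\ia$ is nilpotent) and $f=1$ forces $\ia\supseteq\ia^n=R$ (so $\ia=R$). The semilocal subcase is handled similarly, invoking \ref{cora100} a) to replace $\ga_\ia$ by $\oga_\ia$ and working maximal-by-maximal to eliminate nontrivial idempotents, exploiting the hypothesis that some power of $\ia$ is of finite type. For (a), $\ia$ prime and $f(1-f)=0\in\ia$ force $1-f\notin\ia$ (else $1\in\ia$); the clopen decomposition $R\cong fR\times(1-f)R$ combined with primality of $\ia$ then forces $f=0$, whence $\ia^n=0$, and primality of $\ia$ concludes $\ia=0$.

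The principal obstacle is the very last step in (a) and the semilocal subcase of (c): ruling out nontrivial idempotents $f$ using only primality (respectively, the semilocal plus power-of-finite-type structure). This is where the most care is required, since the derivation of $\ia^n=\langle f\rangle_R$ is otherwise uniform across the three parts, and every part collapses to the single question of which idempotents can sit inside $\ia$.
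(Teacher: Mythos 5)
Your common step is correct, and in fact it proves more than the paper does. From $R=\ga_\ia(R)+\ia$ (\ref{exact107}) you write $1=x+r$ and use only that some power $\ia^n$ annihilates the single element $x$, which holds by the very definition of $\ga_\ia(R)$; the paper's proof of (b) instead chooses $n$ with $\ia^n\ga_\ia(R)=0$, which is precisely what the bounded-torsion hypothesis supplies. So your argument shows the bounded-torsion assumption in (b) is superfluous, and it in fact answers \ref{exact140}~B) affirmatively, contrary to the author's conjecture there. From the point where you obtain $\ia^n=\langle f\rangle_R$ with $f\in\idem(R)$, your treatment of (b) and of the local case of (c) matches the paper's route through \ref{def50}, \ref{exact110}~a) and \ref{idem20}~B).

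The obstacle you flag in (a) and in the semilocal case of (c) is not merely a place ``where care is required''; it is a genuine gap that cannot be closed, because those parts of the statement are false. Primality of $\ia$ does not force $f=0$: take $R=K\times K$ for a field $K$ and $\ia=K\times 0$. Then $\ia$ is prime, $\ia=\langle(1,0)\rangle_R$ is generated by a single idempotent, so $\ga_\ia$ commutes with inductive limits by \ref{exact110}~a), yet $\ia\neq 0$; thus (a) fails. The same $R$ is semilocal, $\ia$ is of finite type, and $\ia$ is neither $R$ nor nilpotent, so the semilocal case of (c) fails as well, and no ``maximal-by-maximal'' elimination of nontrivial idempotents can succeed. (For comparison, the paper's own proof of (a) breaks at the inference ``$\ia_\ia=0$, and thus $\ia=0$'', and its semilocal argument in (c) overlooks the maximal ideals that do not contain $\ia$. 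The local case of (c) is the only part that survives; there your argument and the paper's are both sound, but they differ: you use that a local ring has no idempotents other than $0$ and $1$, while the paper produces a unit inside $\ga_\ia(R)$ directly from the surjection $\ga_\ia(R)\twoheadrightarrow R/\ia$.)
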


\begin{proof}
a) Suppose that $\ga_\ia$ commutes with inductive limits, so that $R=\ga_\ia(R)+\ia$ by \ref{exact107}. Applying $\bullet_\ia$ and keeping in mind \ref{flat15} we get $R_\ia=\ga_\ia(R)_\ia+\ia_\ia\subseteq\linebreak\ga_{\ia_\ia}(R_\ia)+\ia_\ia\subseteq R_\ia$, hence $R_\ia=\ga_{\ia_\ia}(R_\ia)+\ia_\ia$. As $\ia_\ia$ is the unique maximal ideal of $R_\ia$, we get $\ga_{\ia_\ia}(R_\ia)=R_\ia$. Therefore, $\ia_\ia$ is nilpotent, hence $\ia_\ia=0$, and thus $\ia=0$. The converse is clear by \ref{def80} b).

b) Suppose that $\ga_\ia$ commutes with inductive limits. There exists $n\in\N$ with $\ia^n\ga_\ia(R)=0$. Then, $\ia^n=\ia^n R=\ia^n(\ga_\ia(R)+\ia)=\ia^n\ga_\ia(R)+\ia^n\ia=\ia^{n+1}$ (\ref{exact107}), hence $\ia^n$ is idempotent. It follows from \ref{def50}, \ref{exact110} a) and \ref{idem20} B) that $\ia^n$ is generated by a single idempotent. The converse is clear by \ref{def50} and \ref{exact110} a).

c) Suppose that $\ga_\ia$ commutes with inductive limits and that $\ia\neq R$. First, we consider the case that $R$ is local, and we denote its maximal ideal by $\im$. Applying $\ga_\ia$ to the exact sequence of $R$-modules \[0\rightarrow\ia\hookrightarrow R\rightarrow R/\ia\rightarrow 0\] yields an exact sequence of $R$-modules \[0\rightarrow\ga_\ia(\ia)\rightarrow\ga_\ia(R)\rightarrow R/\ia\rightarrow 0.\] So, there exists $x\in\ga_\ia(R)$ with $x-1\in\ia\subseteq\im$, implying $x\notin\im$, hence $x\in R^*$, and therefore $\ga_\ia(R)=R$. It follows that $\ia$ is nilpotent. Next, we consider the case that $R$ is semilocal and that $\ia$ has a power of finite type. If $\im$ is a maximal ideal of $R$ with $\ia\subseteq\im$, then $\ia_\im\neq R_\im$ and $\ga_\ia(\bullet)_\im\cong\ga_{\ia_\im}(\bullet)$ (\ref{flat42} a)), hence $\ga_{\ia_\im}(\bullet)$ is exact, and thus what we have shown already implies that $\ia_\im$ is nilpotent. So, as $R$ is semilocal, there exists $m\in\N$ such that for every maximal ideal $\im$ of $R$ we have $(\ia_\im)^m=(\ia^m)_\im=0$, and it follows $\ia^m=0$. The converse is clear by \ref{def80}.
\end{proof}

\begin{no}\label{exact100}
A) If $\ga_\ia$ commutes with inductive limits, then $\ia$ need not be of finite type, as exemplified by a nilpotent ideal that is not of finite type (\ref{def80}).\smallskip

B) If $\oga_\ia$ commutes with inductive limits, then $\ia$ need not have a power of finite type, as exemplified by a nil ideal that is not nilpotent (\ref{def80}).
\end{no}

\begin{no}\label{exact130}
A) We know from \cite[3.7]{qr} that Grothendieck's Vanishing Theorem for small local cohomology, i.e., the right derived cohomological functor of the small torsion functor (cf. \cite[6.1.2]{bs}), need not hold over a non-noetherian ring. We can use \ref{exact110} to reach the same conclusion for small and large local cohomology. Indeed, let $R$ be absolutely flat and non-noetherian, and suppose that $\ia$ is not of finite type. Note that $\dim(R)=0$. By \ref{idem30} and \ref{exact110} c), the functor $\ga_\ia=\oga_\ia$ does not commute with inductive limits and hence is not exact. So, there exists an epimorphism of $R$-modules $h$ such that $\ga_\ia(h)$ is not an epimorphism, implying $H^1_\ia(\ke(h))\neq 0$ (where $H^1_\ia$ denotes the first right derived functor of $\ga_\ia$). Thus, Grothendieck's Vanishing Theorem does not hold over $R$.\smallskip

B) Hartshorne's Vanishing Theorem for small local cohomology (cf. \cite[3.3.3]{bs}) need not hold over a non-noetherian ring, for it would imply that if $\ia$ is nil, then $\ga_\ia$ is exact, contradicting \ref{exact70} b). 
\end{no}

\begin{qus}\label{exact140}
A) The preceding results give rise to the following questions (whose answers will probably require a study of small and large local cohomology):
\begin{aufz}
\item[($*$)] \textit{For which $\ia$ does $\ga_\ia$ or $\oga_\ia$ commute with projective limits?}
\item[($**$)] \textit{For which $\ia$ does $\ga_\ia$ or $\oga_\ia$ commute with (right filtering) inductive limits?}
\end{aufz}

B) The above results \ref{exact108} and \ref{exact100} give rise to the following more specific question:
\begin{aufz}
\item[$(*)$] \textit{Suppose that $\ga_\ia$ commutes with inductive limits. Does $\ia$ need to have a power of finite type?}
\end{aufz}
I conjecture its answer to be negative but was not able to find a counterexample.
\end{qus}


\noindent\textbf{Acknowledgements:} For their various help, mostly via {\tt MathOverflow.net}, I thank Andreas Blass, Yves Cornulier, Pace Nielsen, Pham Hung Quy and Will Sawin.


\end{document}